\newtheorem{theorem}{Theorem}
\newtheorem{proposition}{Proposition}
\newtheorem{corollary}{Corollary}
\def\ds{\displaystyle}
\def\n{\noindent}
\def\bsq{\blacksquare}
\def\eproof{$\hfill\bsq$\par}
\begin{document}

\title{Things to do with a broken stick
\footnote{or ``Geometric probabilities for triangle constructions''}}

\author{
 Eugen J.~Iona\c{s}cu and Gabriel Pr\u{a}jitur\u{a}
}
\date{April $20^{th}$, 2013}

\maketitle

\section{INTRODUCTION}

The following problem, sometimes called the
spaghetti problem, goes back to at least 1854, being included in
\cite{me} on page 49. A good historical account can be found in a
recent paper of Goodman (\cite{gg}). For a long time it has
captured the attention of various mathematicians and educators and
it seems to have stirred quite an interest in more recent years
(see \cite{dg}, \cite{bb}, \cite{gg}, \cite{guy}, \cite{di},
\cite{j}, and \cite{lw}). The following formulation is probably closer than others to
Martin Gardner's prefernece (\cite{mg}, \cite{di}):

\begin{quote}
{\bf The Broken Stick Problem:} {\it A spaghetti stick, dropped on
the floor, breaks at random into three pieces. What is the
probability that the three parts obtained are the sides of a
triangle?}
\end{quote}

The formulation of this problem in \cite{me} is a little different
but illuminating: {\it ``A rod is marked at random at two points,
and divided into three parts at those points; shew that the
probability of its being possible to form a triangle with the
pieces is $\frac{1}{4}$."}  In this paper, we will consider other problems that start
with breaking a stick into three pieces, and using the resulting lengths to construct a
triangle. There are many different ways in which a triangle can be
constructed from three segments. Such a triangle can be defined more or less in an unique way, i.e. it can be identified up to congruency or similarity. In the original problem, the segments become the
sides, but they might also become the medians of the triangle, or the angle bisectors,
or even other parameters of the triangle such as angles. In each case we can ask for the
probability that a triangle can actually be constructed from these measurements, and the
probability that the triangle is acute. For example, in the original problem stated above, the triangle is acute with probability
$\ln (8/e^2)$. In some cases the probabilities considered
are difficult to compute, if not impossible, and in those
situations we find only approximations for them or their
experimental frequencies. To give the reader more inside and to challenge
him/her at the same time, we next include one such problem
discussed briefly in Section~\ref{AngleBisectors}:

\begin{quote}
{\it A stick is broken into three pieces at random. Show that the
probability that the three parts obtained are the angle bisectors
of a triangle is equal to one. }
\end{quote}

The triangle mentioned above is uniquely determined and the
probability that it is acute is about $0.1195$ (found
experimentally). The exact value of this probability is yet
unknown to us. One other intriguing fact that we discuss in more detail in Section~\ref{mediansCase}, is that
the probability for the existence of a triangle  whose medians are the three parts of the stick is still $\frac{1}{4}$.
Moreover, the probability that this triangle exits and it is an acute triangle equals
$$\frac{1}{3}-\frac{5}{9}\ln\left(\frac{8}{5}\right)\approx 0.0722202059.$$

For a summary of the geometric  probabilities calculated here or left as exercises
for the reader in Section~\ref{challengeproblems}, one may go directly to the table at the end of the paper.

In what follows we are going to adopt the standard notations for
the elements in an arbitrary triangle $ABC$: $a$, $b$, and $c$ for
the sides, $A$, $B$, and $C$ for its angles (measured in radians),
$m_a$, $m_b$, and $m_c$ for the lengths of its medians, $h_a$,
$h_b$, and $h_c$ for the lengths of its altitudes, $w_a$, $w_b$,
and $w_c$ for the lengths of its angle bisectors, $K$ for its
area, $R$ and $r$ for the radii of the circumcircle and the
incircle, $O$ the center of the circumcircle and $I$ for the
center of the incircle.

\section{About our probabilistic model}\label{firstsection}

As with most geometric probabilities, it is important to be very
specific about how the random concept is defined--- in our case, as to how the two breaking points are chosen.
It is natural to consider that these points are simultaneously chosen at random with uniform distribution.  How do we accomplish this,
is on one hand of theoretical importance and on another, useful for experimental simulations that should match our exact calculations. A simple way to do this and an equivalent one is to choose
a point $(x,y)$ uniformly from the square $[0,L]^2\subset \mathbb R^2$, where $L$ is the stick length, and then break the stick at $x$ and $y$. Since the endpoints are perfectly symmetric we cannot distinguish between $(x,y)$ and $(y,x)$.

However, our approach here is different but the idea is nevertheless a classical one. Surprisingly enough (see \cite{gg} and \cite{p}),
$\rm{Poincar\acute{e}}$ was the first to use this idea and showed that it indeed models the stick problem in the sense stated above.

Without loss of generality, we will assume the stick has a length $L:=\sqrt{3}$. The procedure
of obtaining the three broken parts, of lengths $\alpha$, $\beta$,
and $\gamma$, and with these parts positioned in order from left
to right, say, on a horizontal stick, is the following.

Let $ABC$ denote an equilateral triangle with side lengths equal
to $2$ (Figure~\ref{Figure1}(a)), having coordinates $A(1,0)$, $B(-1,0)$ and $C(0,\sqrt{3})$. We choose a point $Q$ uniformly distributed inside of the triangle $ABC$.
Then the three parts of the stick are just the distances,
$\alpha=QM$, $\beta=QN$, and $\gamma=QP$ from $Q$ to the sides of
the equilateral triangle $ABC$. Viviani's Theorem (see \cite{kkaw} and \cite{hsam}) tells us that
indeed $\alpha+\beta+\gamma=\sqrt{3}$. Then, the way we are going to
calculate the probability of an event $E$ is first to determine
the region $\mathcal R$ inside of the equilateral triangle $ABC$
that characterizes it, and then put $P(E):=\frac{Area({\mathcal
R})}{\sqrt{3}}$ since the area of the triangle $ABC$ is
$\sqrt{3}$. $\rm{Poincar\acute{e}}$ (\cite{p}) has shown that this is a perfect model for the stick problem, bringing beauty, symmetry
and easiness in calculations to all of our variations considered here.

\begin{figure}
\[
\underset{(a)}{\epsfig{file=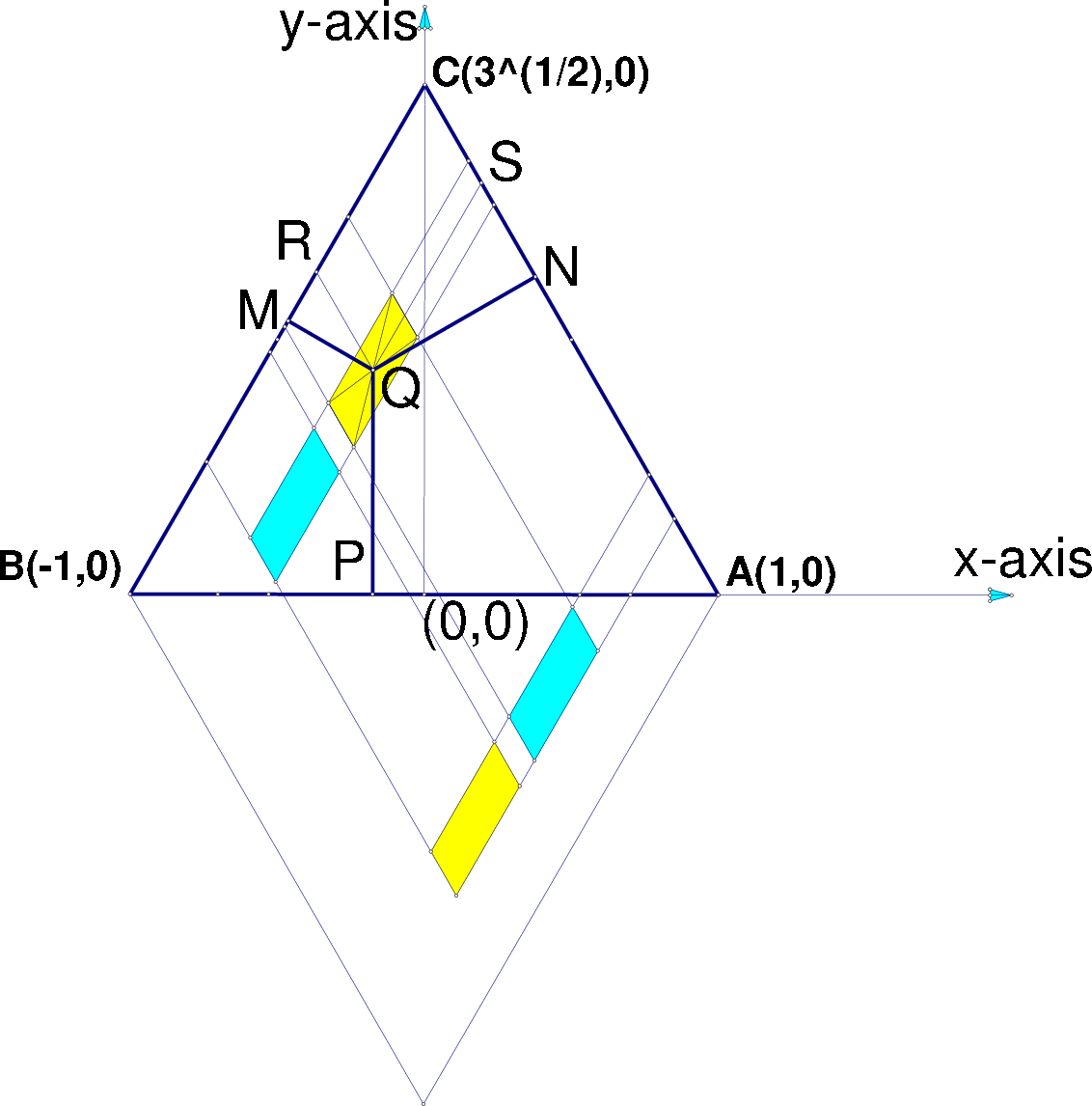,height=2.2in,width=2.6in}}
\ \
\underset{(b)}{\epsfig{file=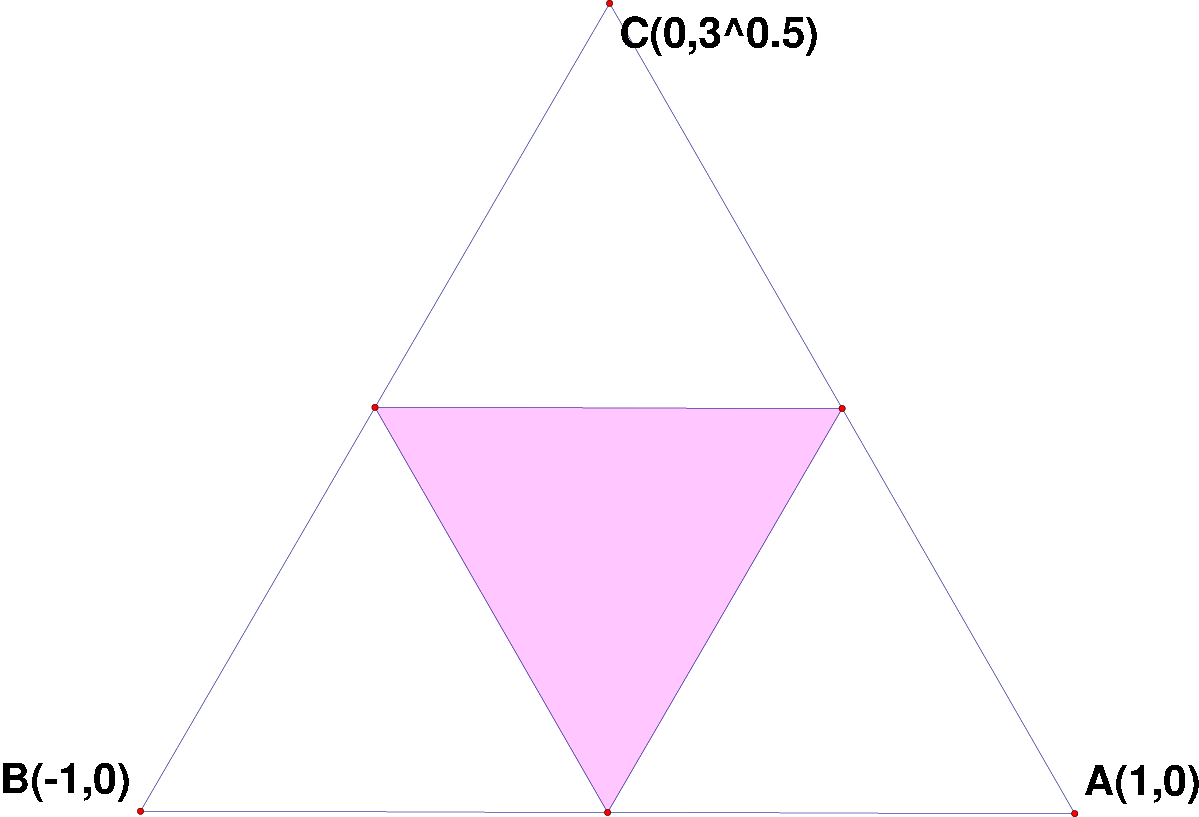,height=1.57in,width=2.46in}}
\]
\caption{Distances $\alpha=QP,\ \beta=QM,\ \text{and}\ \gamma=QN$,
add up to $\sqrt{3}$.} \label{Figure1}
\end{figure}

From here on we will refer to this model whenever we have a
probability question which involves three positive quantities
which add up to a constant value.   Let us make a few more observations about Figure~\ref{Figure1}(a). Let $(x,y)$ be the generic Cartesian coordinates of the
point $Q$ relative to a system of orthogonal axes. We observe that $x\in [-1,1]$ and
$y\in [0,\sqrt{3}]$. In general, the distance $d$ from a point with coordinates $(x_0,y_0)$ to
the line $mX+mY+p=0$ is given by the formula
$d=\frac{|mx_0+ny_0+p|}{\sqrt{m^2+n^2}}.$ Since $\overline{BC}$
has equation $\sqrt{3}(X+1)-Y=0$ and $\overline{AC}$ has equation
$\sqrt{3}(1-X)-Y=0$, we obtain

\begin{equation}\label{definitionofalphabetagamma}
\alpha=y, \  \ \beta=\frac{\sqrt{3}(1+x)-y}{2}\ \  \text{and} \
 \ \gamma=\frac{\sqrt{3}(1-x)-y}{2}.
\end{equation}

We will do all of the computations needed in terms of $x$ and $y$,
taking advantage, in most of the situations, of the symmetries of
the region involved such as $120^{\circ}$ rotational invariance.

We divided the rest of this article into three sections. The first is designed
to give situations where exact calculations can be done, the
second contains various experimental cases, and finally we end with a section in which we summarize
the probabilities included here, some other results whose proofs can be found elsewhere,
some open questions and further lines of investigation.

\section{EXACT CALCULATIONS}\label{exactcalculations}

To exemplify our model,  let us look at the original stick problem---the region
that describes the event that a triangle with sides $\alpha$,
$\beta$ and $\gamma$ exists is given by the triangle
inequality, which in turn can be written as $\max(\alpha,\beta,\gamma)<
(\alpha+\beta+\gamma)/2=\sqrt{3}/2$. This gives
the interior of the triangle determined by the midpoints of the
sides $\overline{AB}$, $\overline{CB}$ and $\overline{CA}$ as depicted in
Figure~\ref{Figure1}(b). Hence, the probability of having a
triangle with $\alpha$, $\beta$ and $\gamma$ as its side lengths
is equal to $1/4$.

Let us observe that, it is still the same probability (and idea of
proof) for the existence of an acute triangle with angles (in
radians) of $\frac{\alpha\pi}{\sqrt{3}}$,
$\frac{\beta\pi}{\sqrt{3}}$, and $\frac{\gamma\pi}{\sqrt{3}}$. As a result we find
that {\it ``There are three times as many obtuse-angled triangles as there
are acute-angled ones"} as Richard Guy found in \cite{guy}. In what will follow we will
look at this ratio, between the probability of obtaining an obtuse triangle versus an isosceles one,
from different constructions. We will see that this ratio may take unexpected values (far away from 3) depending upon the construction used.

 Other authors, see \cite{guy},
\cite{i}, and \cite{lw}, have looked into similar questions, but our
technique is nevertheless the first that goes through a
significant number of such problems and provides a common approach
for their solutions.  In \cite{lw}, for instance, it is shown that the probability that an acute triangle of sides $\alpha$, $\beta$ and $\gamma$ exists is
$2(-\ln(1/2)-1+\text{arccosh}\left(3\frac{\sqrt{2}}{4}\right))$.
We do the calculations for this problem in the next section for
completeness, our method being quite shorter than the one used in
\cite{lw} and, also because our answer, although the same, turns
out to be $3\ln 2-2 \approx 0.079441$.

There are a few natural questions along the
lines specified in the Introduction  in which the probabilities
involved turn out to have interesting expressions in terms of
known constants and these are going to be included in the next four subsections.

\vspace{0.2in}

\subsection{The Sides}\label{thesides} We have already analyzed the
classical problem and the reader can find various approaches to it
in \cite{gr} and \cite{me}.

Let us continue with our initial classical problem and see what
happens in the special case when the triangle constructed with
$\alpha$, $\beta$, and $\gamma$ is acute.  A similar probability is
studied in \cite{es} in  Euclidean geometry and in \cite{i} in
hyperbolic geometry.

\begin{figure}[h]
 $$\underset{\ \text {Acute triangle with $\alpha$, $\beta$ and
$\gamma$ as sides}
}{\epsfig{file=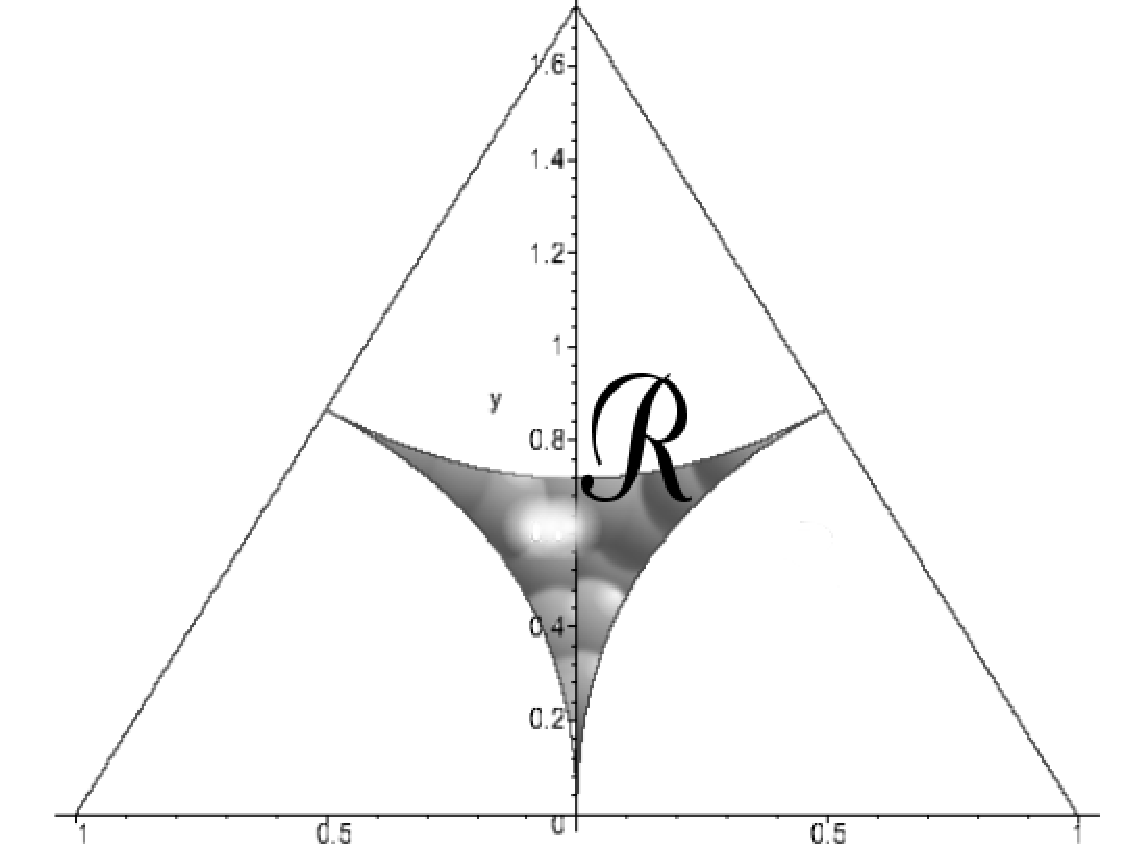,height=1.6in,width=2in}}$$\
\ \  \caption{
$\frac{1}{\sqrt{3}}Area(\mathcal R)=3\ln 2-2$.} \label{Figure2}
\end{figure}

\begin{theorem}\label{sidesacute} The probability that the three parts of the broken
stick form an acute triangle is equal to $\ln \left(8/e^2\right)$.
\end{theorem}

\begin{proof} We need to find the area of the region
$\mathcal R$ (see Figure~\ref{Figure2}(b)), described by
$\alpha^2+\beta^2-\gamma^2>0$, $-\alpha^2+\beta^2+\gamma^2>0$, and
$\alpha^2-\beta^2+\gamma^2>0$. This region is bounded by three
hyperbolae  which pass through the midpoints of the sides and
intersect only at these points as shown in Figure~\ref{Figure2}(b). The inequality $\alpha^2-\beta^2+\gamma^2>0$ becomes
$y^2-3x+\sqrt{3}xy>0$ if we use the substitutions from
(\ref{definitionofalphabetagamma}), or
$x<\frac{y^2}{\sqrt{3}(\sqrt{3}-y)}$. So, the probability we are
interested in is

$$P=\frac{1}{\sqrt{3}}\left(\frac{\sqrt{3}}{4}-3\int_0^{\sqrt{3}/2}\frac{y}{\sqrt{3}}-
\frac{y^2}{\sqrt{3}(\sqrt{3}-y)}dy\right)=3\ln 2-2. $$
 \end{proof}
This probability was also obtained implicitly by
Richard Guy in \cite{guy}, where he  looked at some other ways
of constructing a triangle besides the broken stick approach.
Guy gives the value of $\frac{\frac{1}{4}-P}{\frac{1}{4}}=9-12\ln
2$ representing the conditional probability that an obtuse
triangle is obtained, knowing that the three parts of the stick
already form a triangle. In this
situation we obtain
$$\frac{P(obtuse)}{P(acute)}=\frac{9-12\ln 2}{12\ln 2-8}\approx
2.146968.$$

\subsection{MEDIANS}\label{mediansCase}  There is a well known theorem in geometry stating that if one constructs a triangle using the medians of a given
triangle and then does that again, i.e. constructs a triangle with
the new medians, the result is a triangle similar to original
triangle and the similarity ratio is $\frac{3}{4}$
(Figure~\ref{Figure3} (b)). This explains at least the first part
of the next result.

\begin{figure}
$\underset{ \text {(a)\ Acute triangles with $\alpha$, $\beta$ and
$\gamma$ as medians}
}{\epsfig{file=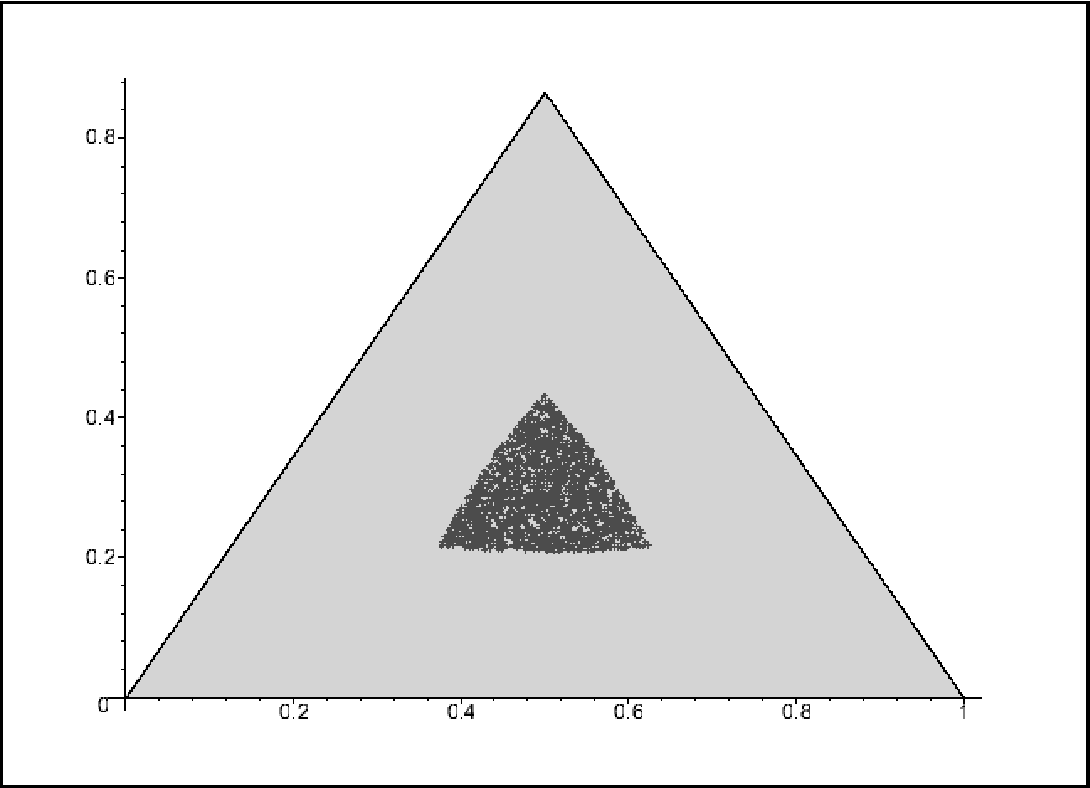,height=2in,width=2.3in}}$\ \ \
$\underset{(b)\ \text{Two iterations bring back the shape}
}{\epsfig{file=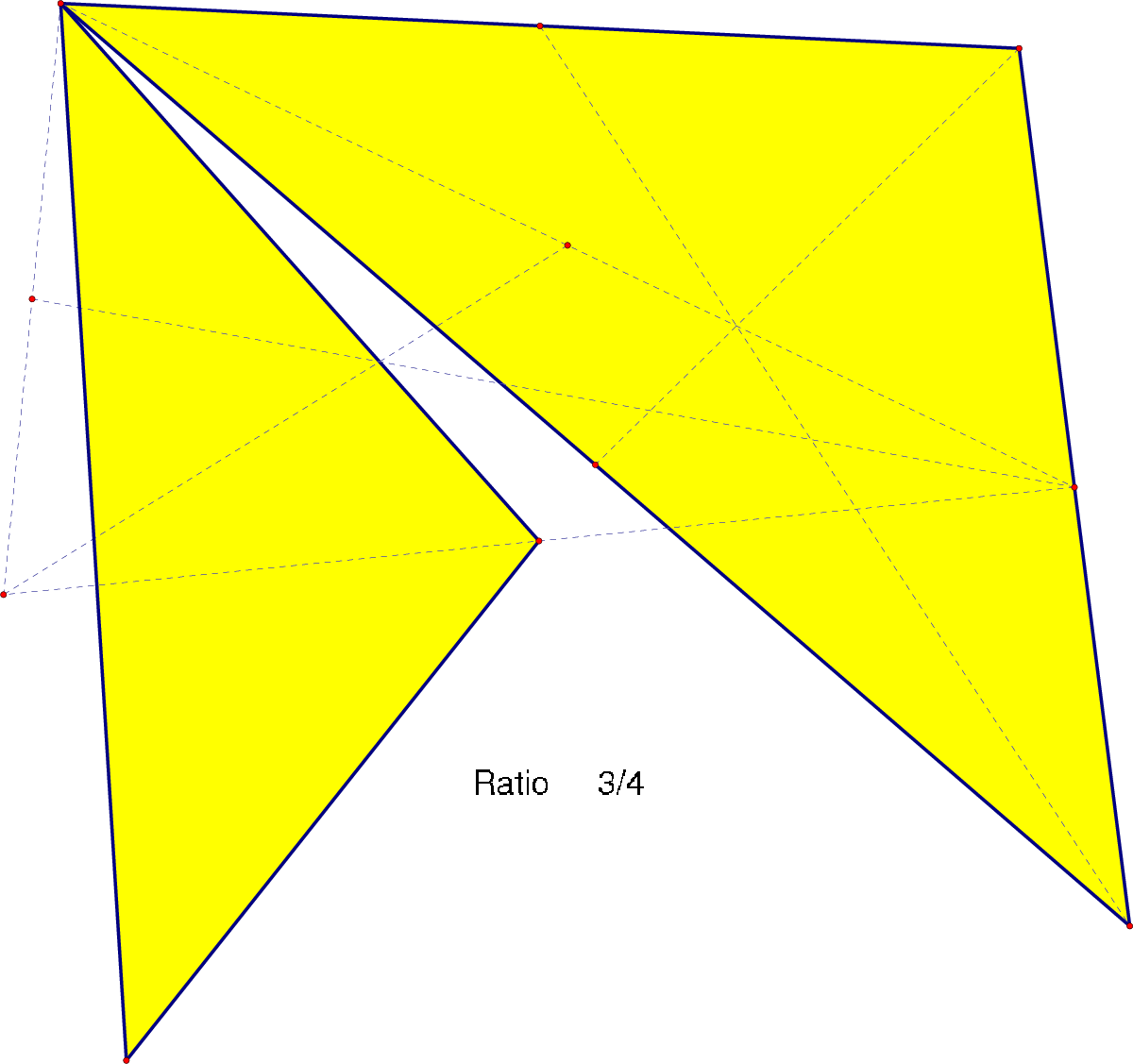,height=2in,width=2.1in}}$
\caption{Medians} \label{Figure3}
\end{figure}

\begin{theorem} Given three positive quantities $u$, $v$ and $w$,
there exists a triangle whose medians are precisely $u$, $v$ and
$w$ if and only if $u+v+w>2\max(u,v,w)$. If the triangle exists,
it is unique.  Moreover, the triangle is acute if and only if
$u^2+v^2+w^2<6 \min(u^2,v^2,w^2)$.
\end{theorem}
\begin{proof} The formula which gives the medians in terms of the
sides of the triangle $ABC$ is $m_a^2=\frac{2(b^2+c^2)-a^2}{4}$.
This implies that
$a^2=\frac{4}{9}\left(2(m_b^2+m_c^2)-m_a^2\right)$ and the other
analogous relations for $b^2$ and $c^2$. If $m_a=u$, $m_b=v$ and
$m_c=w$, the inequality $a+b>c$ is equivalent to

\begin{equation}\label{mediansid}
2\sqrt{2v^2+2w^2-u^2}\sqrt{2u^2+2w^2-v^2}>(u^2+v^2+w^2)-6w^2.
\end{equation}

We note that if $u^2+v^2+w^2<6\ min(u^2,v^2,w^2)$ then the above
inequality is true. So, for the second part of the statement, the
hypothesis implies by itself the existence of the triangle with
medians $u$, $v$ and $w$. We may suppose then that $u^2+v^2+w^2\ge
6\ min(u^2,v^2,w^2)$. Assuming, without loss of generality, that
$w\le v\le u$, we have $u^2+v^2+w^2\ge 6w^2$. This means we can
continue with (\ref{mediansid}) by squaring both sides and get

%$$2u^2v^2+2v^2w^2+2w^2u^2-u^4-v^4-w^4>0 \ \Leftrightarrow\ $$
 $$(u+v+w)(u+v-w)(u-v+w)(-u+v+w)>0.$$

Certainly, under the hypothesis that $w\le v\le u$, the above
translates into $v+w>u$ or $u+v+w>2\max(u,v,w)$. Now, if
$u^2+v^2+w^2<6 \min(u^2,v^2,w^2)$ we can show that this also
implies  $v+w>u$. Indeed, if $u^2+v^2<5w^2$ then $u^2<5w^2-v^2\le
(v+w)^2$ and so $u<v+w$. Hence, in any case we must have
$u+v+w>2\max(u,v,w)$. This proves the ``necessary" part of the
first statement in our theorem.

For the converse let us observe that the formulae for $a$, $b$ and
$c$ in terms of $u$, $v$ and $w$, i.e.
$a^2=\frac{4}{9}\left(2(v^2+w^2)-u^2\right)$, etc., make sense
because, for instance, $2(v^2+w^2)\ge (v+w)^2>u^2$. The triangle
inequality, $a+b+c>2\max(a,b,c)$, follows from the work we did
earlier.

For the second part of the statement, one has to observe that
$a^2+b^2-c^2>0$ is equivalent to $u^2+v^2<5w^2$.
\end{proof}

\begin{corollary}\label{medians} The probability that the three parts of the broken
stick are the medians of a triangle  is $\frac{1}{4}$. Moreover, the
probability that this triangle is acute equals
$$\frac{1}{3}-\frac{5}{9}\ln\left(\frac{8}{5}\right)\approx 0.0722202059.$$
\end{corollary}

\begin{proof} The first part of this corollary follows from what we did earlier. The region that
defines the acute triangles with medians $\alpha$, $\beta$ and
$\gamma$ is depicted in Figure~3(a). This region is bounded by
the curves $\alpha ^2+\beta ^2+\gamma^2=6\alpha^2$, $\alpha
^2+\beta ^2+\gamma^2=6\beta^2$ and $\alpha ^2+\beta
^2+\gamma^2=6\gamma^2$, each pair intersecting at points such as
$\alpha=\beta=\frac{\sqrt{3}}{4}$, $\gamma=\frac{\sqrt{3}}{2}$ and
the other  corresponding cyclic permutations. Using
(\ref{definitionofalphabetagamma}), the curve
$\beta^2+\gamma^2=5\alpha^2$ in terms of $x$ and $y$ is the
hyperbola

$$y=\frac{1}{3\sqrt{3}}(\sqrt{9x^2+10}-1)$$

\n which gives the probability

$$P=\frac{1}{\sqrt{3}}\left(\frac{\sqrt{3}}{16}+3\int_{-1/4}^{1/4}\frac{\sqrt{3}}{4}-\frac{\sqrt{9x^2+10}-1}{3\sqrt{3}}dx\right).$$
One can use the formula

\begin{equation}\label{indefiniteint}
\int\sqrt{x^2+k}dx=\frac{1}{2}x\sqrt{x^2+k}+\frac{k}{2}\ln(x+\sqrt{x^2+k})+C, \ k\in \mathbb R,
\end{equation}

\n to compute this last integral and simplify it to the expression
in the statement of the corollary. \end{proof}

In this case, the ratio between obtuse versus acute  is equal to

$$\frac{P(obtuse)}{P(acute)}=\frac{3-60\ln 2+20\ln 5}{60\ln 2-20\ln 5-12} \approx
2.461635121 $$ \vspace{0.2in}

\vspace{0.1in}

\subsection{The altitudes}\label{altitudesCase} There are
fairly complicated formulas that give the sides $a$, $b$ and $c$
of a triangle in terms of its altitudes $h_a$, $h_b$ and $h_c$.
However, the existence of $a$, $b$ and $c$  is given by a very
basic condition which allows a closed form for the desired
probability.

\begin{theorem}\label{altitudestheorem}  A stick is broken into three pieces, $\alpha$, $\beta$ and $\gamma$, at random (as described earlier).
\par (i) The probability that $\alpha$, $\beta$ and $\gamma$ are the heights of
a triangle is equal to
$$\frac{4}{25}\left(3\sqrt{5}\ln \frac{3+\sqrt{5}}{2}-5\right). $$
\par (ii) The probability that $\alpha$, $\beta$ and $\gamma$ are the heights of an acute triangle is equal
to

$$1-2\sqrt{3}\int_0^{\frac{2\sqrt{6}-\sqrt{3}}{7}}\left(15t^2-6\sqrt{3}t+9-12t(2t^2-2\sqrt{3}t+3)^{\frac{1}{2}}
\right)^{\frac{1}{2}}\ dt\approx 0.07744388\ . $$
\end{theorem}

In Figure~\ref{figure4}(a) we have depicted the region corresponding to this event.

\begin{proof}   (i) In Figure~\ref{figure4}(a) we have
depicted the region corresponding to this event. The lengths
$\alpha$, $\beta$ and $\gamma$ are the heights of a triangle if
and only if $a=\frac{2S}{\alpha}$, $b=\frac{2S}{\beta}$, and
$c=\frac{2S}{\gamma}$ satisfy the triangle inequality. This is
equivalent to

$$\frac{1}{\alpha}+\frac{1}{\beta}+\frac{1}{\gamma}> 2 \max(\frac{1}{\alpha},
\frac{1}{\beta},\frac{1}{\gamma}).$$

We are going to evaluate the probability of the complementary
event:

$$\underset{(\star)}{\underbrace{\frac{1}{\alpha}+\frac{1}{\beta}\le \frac{1}{\gamma}}},\ %\text{or} \ \
\underset{(\star\star)}{\underbrace{\frac{1}{\beta}+\frac{1}{\gamma}\le
\frac{1}{\alpha}}},\ \text{or} \ \
\underset{(\star\star\star)}{\underbrace{\frac{1}{\alpha}+\frac{1}{\gamma}\le
\frac{1}{\beta}}}.$$

Because of the symmetry of the problem, we will just work with
($\star\star$) using the formulas in
(\ref{definitionofalphabetagamma}):

$$\frac{1}{\beta}+\frac{1}{\gamma}\le \frac{1}{\alpha}\ \Leftrightarrow \
4(\sqrt{3}-y)y\le (\sqrt{3}-y)^2-3x^2\ \Leftrightarrow 3x^2\le
3-6\sqrt{3}y+5y^2.$$

Equivalently, $3x^2\le (\sqrt{3}-y)(\sqrt{3}-5y)$ implies in
particular that $0<y\le \frac{\sqrt{3}}{5}$. Then, we can solve
for $y$ to obtain $$0< y\le
\frac{3\sqrt{3}}{5}-\frac{\sqrt{15x^2+12}}{5}.$$ The graph of the
equation $y=(3\sqrt{3}-\sqrt{15x^2+12})/5$, shown in
Figure~\ref{figure4}(a) as the south boundary of the shaded region, is a
piece of a hyperbola and one can see that the tangent line to this
hyperbola at $(-1,0)$ makes a $30^{\circ}$ angle with the $x$-axis.

\begin{figure}
\begin{center}
$\underset{(a)\ \text{Triangles with $\alpha$, $\beta$ and
$\gamma$ as heights}
}{\epsfig{file=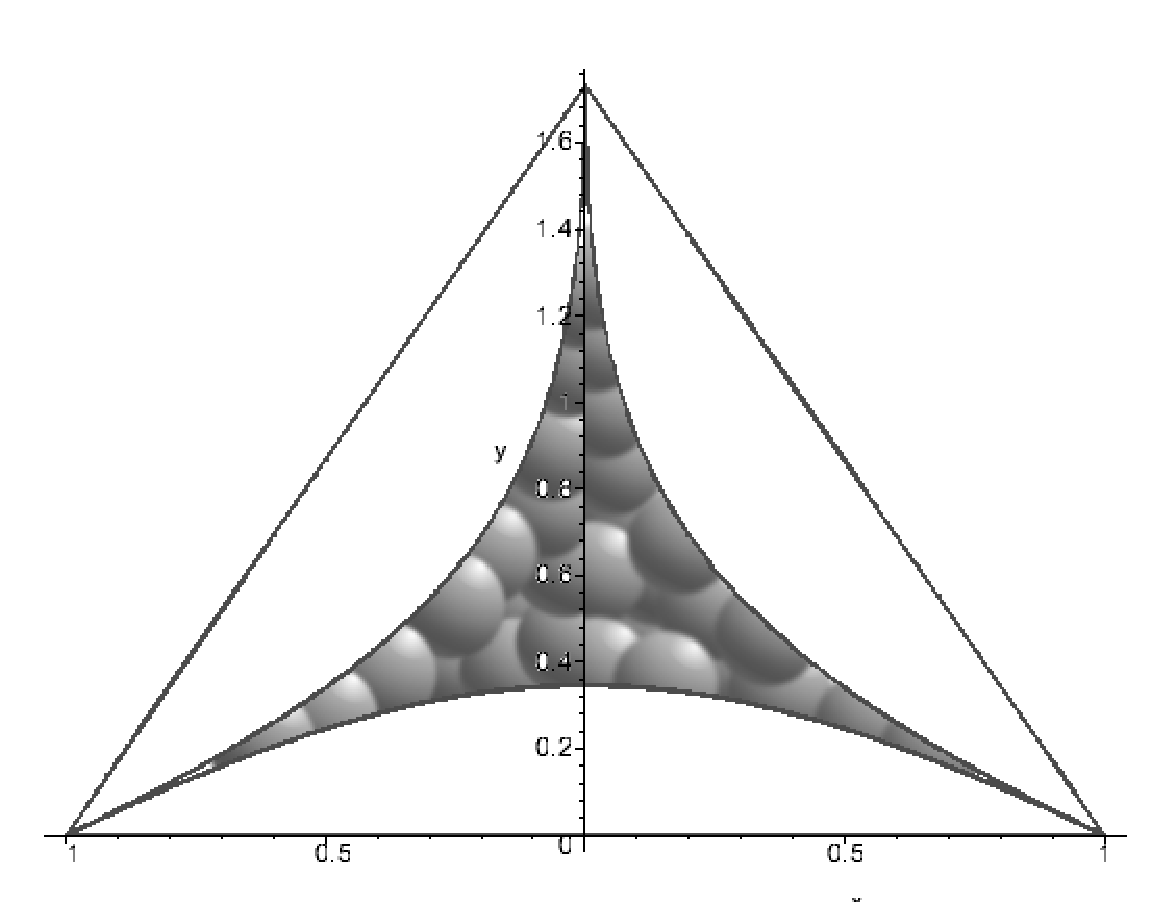,height=2.4in,width=2.4in}}$
$\underset{(b)\ \text{Acute triangles with $\alpha$, $\beta$ and
$\gamma$ as heights}
}{\epsfig{file=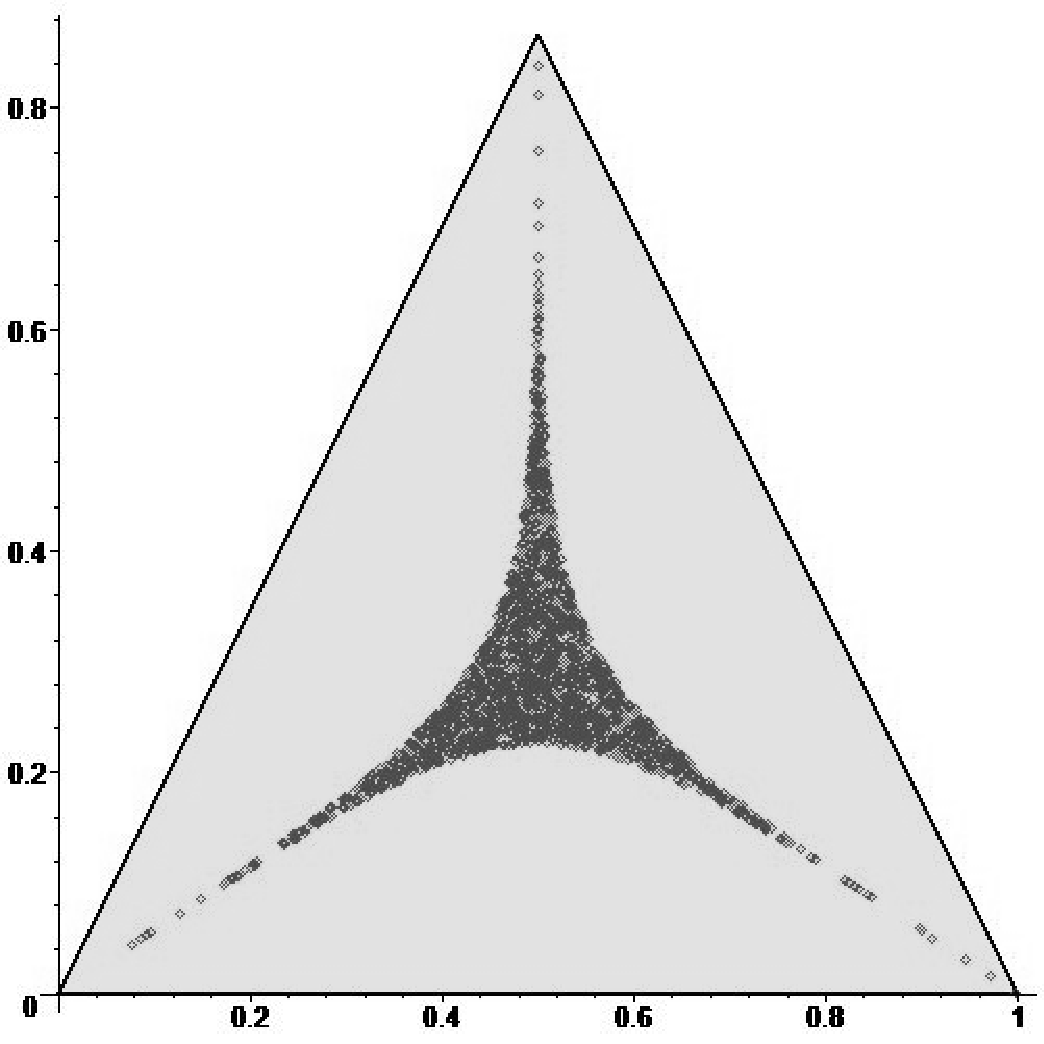,height=2.1in,width=2.2in}}$
\end{center}
\caption{ $\frac{Area(\mathcal R_1)}{Area(\mathcal R_2)})\approx
3$.} \label{figure4}
\end{figure}

This information is enough to conclude that the regions defined by
($\star$), ($\star\star$) and ($\star\star\star$) are disjoint.
Because of the symmetry of the problem, we can say that each such
region has an area of

$$A=2\int_0^1 \frac{3\sqrt{3}}{5}-\sqrt{\frac{3x^2}{5}+\frac{12}{25}}dx= %\frac{6\sqrt{3}}{5}-
2\sqrt{\frac{3}{5}}\int_0^1  \sqrt{x^2+\frac{4}{5}}dx,$$

\n which, after using formula (\ref{indefiniteint}) again, becomes

$$A=\frac{3\sqrt{3}}{5}-\frac{4}{5}\sqrt{\frac{3}{5}}\ln\left(\frac{3+\sqrt{5}}{2}\right).$$

\n Since the area of the triangle $ABC$ is $\sqrt{3}$, the
probability we are looking for is

$$P=1-3\frac{A}{\sqrt{3}}=\frac{12\sqrt{5}}{25}\ln\left(\frac{3+\sqrt{5}}{2}\right)-\frac{4}{5}\approx
0.2329814580.$$
\end{proof}

 For part (ii), we used Maple to compute  the probability that an acute triangle with
heights $\alpha$, $\beta$ and $\gamma$ exists, and found the
expression given in the statement of the theorem. We are not going
to include the derivation either since it is too cumbersome.
Experimentally there was a  fairly good match for the numerical
value given for the probability, and the ``picture" of the event
looks like the one in Figure~\ref{figure4}(b). It is very similar
to the one in Figure~\ref{figure4}(a), but with an area almost
three times smaller.

This gives a ratio
$\frac{P(obtuse)}{P(acute)}$ of about 2.008.

\vspace{0.1in}
\subsection{Radii of three mutually tangent circles and  tangent to the sides}\label{subsectiononrst}
\begin{figure}
\begin{center}
\[\underset{(a) \ r\ge s\ge t>0}{\epsfig{file=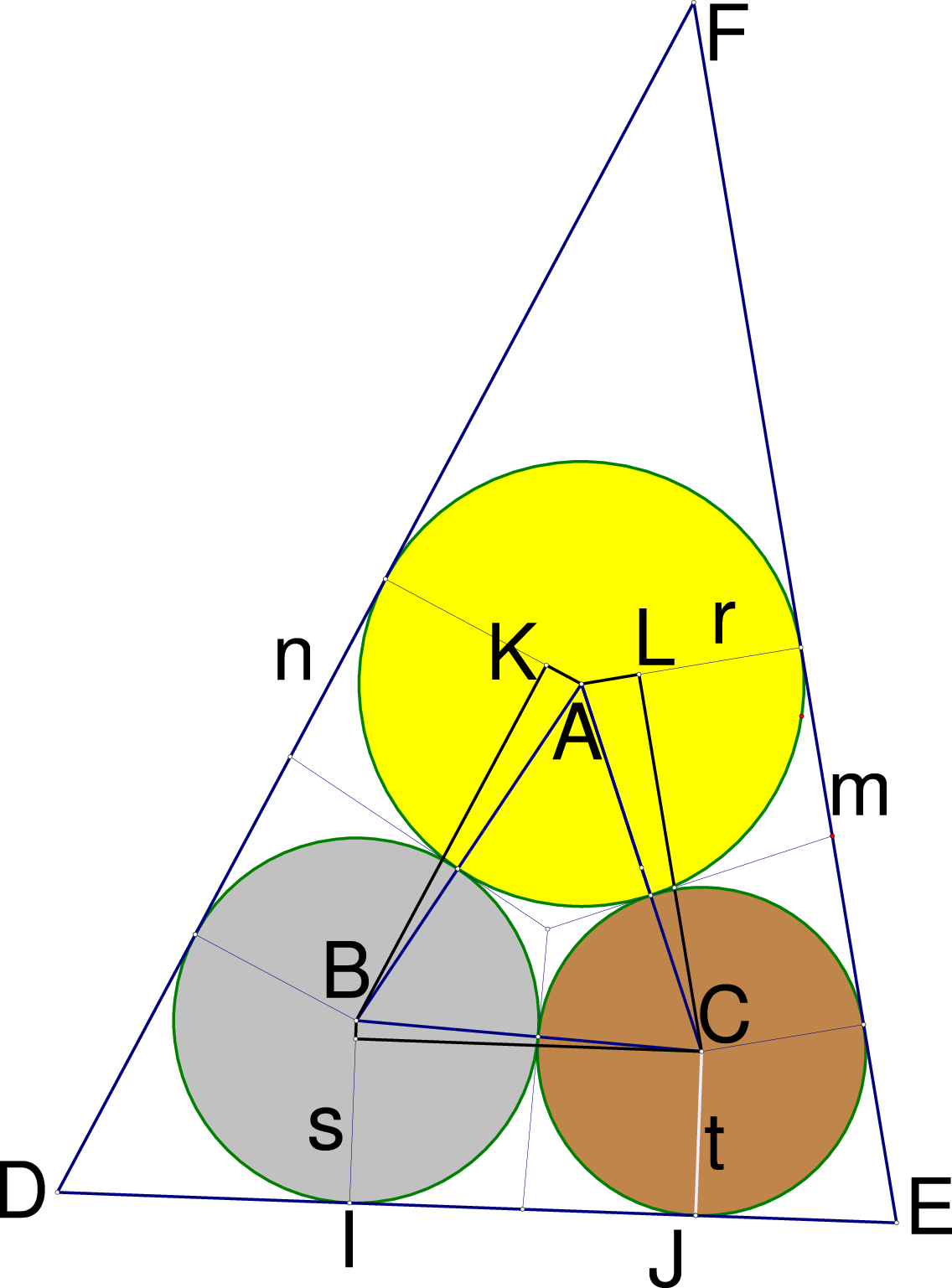,height=2in,width=1.6in}}\ \ \  \underset{(b)\ s<r,
\ s<4t}{\epsfig{file=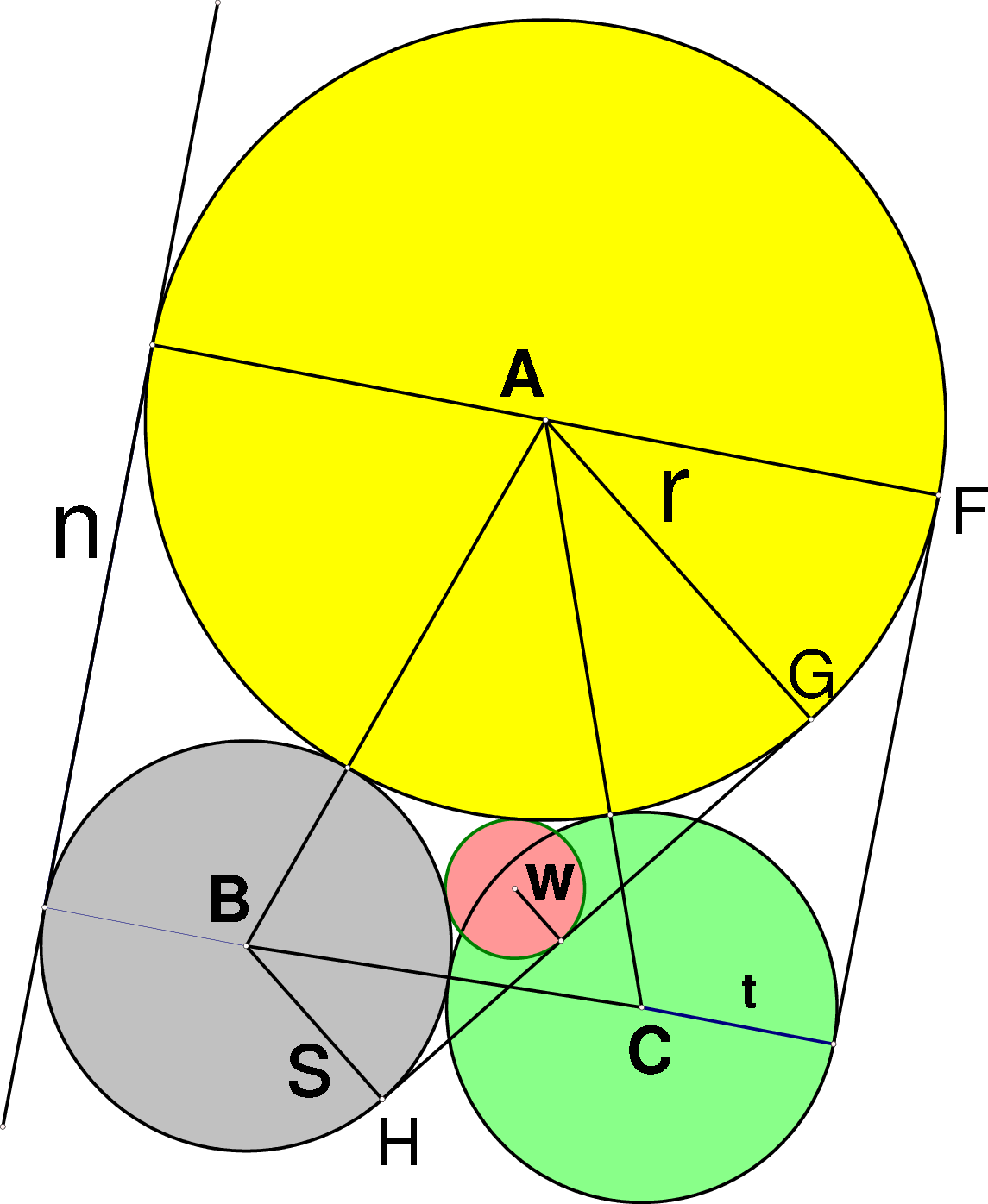,height=2in,width=1.6in}}
\]
\end{center}
\caption{\it Mutually tangent circles and their enclosing triangle}\label{figureitc}
\end{figure}
The following result has been inspired from a similar problem which appeared in Pi Mu Epsilon (\cite{wd}).
\begin{theorem}\label{thenewproblem} Under the hypothesis of the stick problem,
the probability that the three segments are the radii of three  circles tangent to
the sides of a triangle with each pair of these circles mutually externally tangent, is equal
to $\frac{5}{27}$.
\end{theorem}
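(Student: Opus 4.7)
The plan is to reduce the theorem to an algebraic condition on the radii and then integrate it against the uniform measure on the equilateral triangle of Figure~\ref{Figure1}(a). I claim that three mutually externally tangent circles of radii $r,s,t$ can be inscribed in a triangle, each tangent to two of its sides, if and only if
\[
r^{2}\le 4st,\qquad s^{2}\le 4rt,\qquad t^{2}\le 4rs.
\]
When $r=\max(r,s,t)$ the first of these automatically implies the other two: from $s\le r$ one gets $s^{2}\le rs\le r^{2}\le 4st$, hence $s\le 4t$ and $s^{2}\le 4rt$, and similarly $t^{2}\le 4rs$. So the condition really says that \emph{the largest radius squared is at most four times the product of the other two.}

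To derive it I would work from the tangent-length decomposition at each vertex. At a vertex $A$ with half-angle $\alpha$ the two adjacent sides are tangent to circle $A$, and the pair-tangency with circle $B$ gives $c=r\cot\alpha+s\cot\beta+2\sqrt{rs}$, together with the two cyclic analogues and the constraint $\alpha+\beta+\gamma=\pi/2$. The outer triangle degenerates exactly when one half-angle vanishes: in the limit $\alpha\to 0$ vertex $A$ escapes to infinity and the two sides at $A$ become parallel tangents to circle $A$. Placing $O_A=(0,0)$ and those sides as $y=\pm r$, the tangencies of circle $B$ to $y=r$ and to circle $A$ force $O_B=(2\sqrt{rs},\,r-s)$, while symmetrically $O_C=(\pm 2\sqrt{rt},\,t-r)$. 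Imposing $|O_BO_C|=s+t$ the same-sign choice collapses to $r=2\sqrt{st}$, and the opposite-sign choice admits no solution; so the realizability boundary at vertex $A$ is exactly $r^{2}=4st$, and cycling gives the other two. For sufficiency I would argue by connectedness: in the coordinates $(\log r,\log s,\log t)$ the three strict inequalities cut out a convex open polytope containing the equilateral point $(\log r,\log r,\log r)$, where the configuration manifestly exists, so the realizability set coincides with that polytope.

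With the condition in hand the probability integral is short. By the $120^{\circ}$ rotational symmetry of the model it suffices to compute the favorable area in the sector where $\alpha=\max\{\alpha,\beta,\gamma\}$, namely $y\ge(1+|x|)/\sqrt3$. Using $\alpha=y$ and the identity $4\beta\gamma=3(1-x^{2})-2\sqrt3\,y+y^{2}$, the condition $\alpha^{2}\le 4\beta\gamma$ reduces cleanly to the parabolic region $y\le\tfrac{\sqrt3}{2}(1-x^{2})$. This parabola meets the sector boundary $y=(1+|x|)/\sqrt3$ at $x=\pm 1/3$, and an elementary computation gives
\[
\int_{-1/3}^{1/3}\!\left[\tfrac{\sqrt3}{2}(1-x^{2})-\tfrac{1+|x|}{\sqrt3}\right]dx=\tfrac{5\sqrt3}{81}
\]
for the favorable area in that sector. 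Tripling and dividing by $\sqrt3$ (the area of $\triangle ABC$) yields the advertised probability $\tfrac{5}{27}$.

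The main obstacle is the sufficiency half of the algebraic condition: the $\alpha\to 0$ limit directly delivers only necessity, and to upgrade it to an if-and-only-if one must argue that inside the three strict inequalities no other degeneration of the three chosen external tangent lines can occur, and that those three lines really do bound a finite triangle enclosing all three circles. The connectedness argument above, together with the fact that the only way two of the chosen tangent lines can become parallel is precisely the vertex-at-infinity scenario, is what handles this; it is the work-intensive part of the proof, while everything else is a short calculation.
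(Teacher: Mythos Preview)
Your proposal is essentially correct and lands on the same characterization the paper does: the configuration exists if and only if $\max(r,s,t)^2<4\cdot(\text{product of the other two})$, after which the integration over the equilateral model is identical (you integrate over the symmetric sector $|x|\le 1/3$ and triple, the paper integrates over $0\le x\le 1/3$ and multiplies by six; the parabola $y=\tfrac{\sqrt3}{2}(1-x^2)$ and the answer $5/27$ are the same).

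Where you genuinely differ is in deriving that characterization. The paper is constructive: it fixes an order $t<s<r$, writes down the three external common tangents that avoid the triangle of centers, computes their tangent-segment lengths $2\sqrt{st}$, $2\sqrt{rt}$, $2\sqrt{rs}$, and then uses an angle count (Euclid's fifth postulate plus the law of cosines in the triangle of centers) to show that the two ``large'' tangents meet on the correct side exactly when $r<2\sqrt{st}$; a separate estimate handles the reflex-angle obstruction at the vertex opposite the big circle and is shown to be subsumed by $r<2\sqrt{st}$. Your route instead identifies the boundary by the degeneration $\alpha\to 0$ (vertex at infinity), which cleanly gives $r^2=4st$ as the necessity threshold, and then argues sufficiency by connectedness from the equilateral point. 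The payoff of your approach is conceptual economy and a slick way to locate the boundary; the cost is that the sufficiency step is a sketch. In particular, you must rule out any \emph{other} degeneration inside the open region---for instance the reflex-angle/tangent-coincidence phenomenon the paper treats first (their condition $t>\tfrac{rs}{(\sqrt r+\sqrt s)^2}$)---before the connectedness argument closes. The paper handles this directly; in your write-up you acknowledge it as ``the work-intensive part'' but do not carry it out, so as it stands the sufficiency direction is not complete.
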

\vspace{0.1in}

\n \proof   Let us denote by $r$, $s$ and $t$ the three lengths. We are beginning with the simple observation
 that a triangle with the sides $r+s$, $s+t$ and $t+r$ always exists. So, three circles externally tangent
 of radii $r$, $s$, and $t$ can be always constructed. Without loss of generality we may assume that
 $ r> s> t>0$ (the probability that two of the radii or all three to be equal is zero) and $t+s+r=\sqrt{3}$.
 To account for the other possible orders, we will multiply the probability we obtain in the end by $6$.
 We are denoting the center of the biggest circle by $A$, the next smaller circle's center by $B$ and
$C$ for the center of the smallest circle.
Then, the external tangent lines to each two of the circles exist. Out of all six tangent lines, we are clearly
looking here for those tangent lines which do no intersect  $\triangle  ABC$. Figure~\ref{figureitc}(b) suggests that it is
possible to have a triangle with its sides tangent (in exterior) to these circles but the interior of this triangle does not contain them.
So, excluding this situation, we have three
clearly defined tangent lines which do not intersect $\triangle ABC$ and have the potential to give the required triangle.
Basically, we need to characterize when these three tangent lines ``form" a triangle with the
circles in its interior as in Figure~\ref{figureitc}(a), in terms of $r$, $s$ and $t$.

Let us start with one of
the tangent lines, the one tangent to the smaller circles which
does not intersect the big circle. Let $I$ and $J$ denote  the two points
of tangency as in the Figure~\ref{figureitc}(a).

We consider a line through $c$ that is parallel  to $\overset{\leftrightarrow}{IJ}$
and form a rectangle and a right triangle by splitting
the trapezoid $BIJC$ into two parts. The Pythagorean Theorem gives
 the length of the tangent line segment to both of
the smaller circles as:
$IJ=\sqrt{(s+t)^2-(s-t)^2}=2\sqrt{st}$. Similarly, the tangent
line segment to the circles centered $A$ and $C$ has length
$2\sqrt{rt}$ and the third tangent segment is of length
$2\sqrt{rs}$.

Next, we let $m$ be the tangent line to the circle centered at $C$ and $A$ which does not intersect $\overline{AB}$.
We want to show that $\overset{\leftrightarrow}{IJ}$ intersects $m$, and we will denote the point of their intersection
(except when these lines coincide) by $E$. The order between $r$, $s$ and $t$ tells us that the angle $\angle
ACB$ is the biggest angle of $\triangle  ABC$ and so it is more
than $60^{\circ}$. The angle between the tangent lines $m$ and $\overset{\leftrightarrow}{IJ}$, say
$\omega$, is then more than $60^{\circ}$ and less than
$180^{\circ}+2(90)^{\circ}=360^{\circ}$ (including the reflex
angle possibility). Since the case $\omega=180^{\circ}$ means that the two tangent lines coincide, these tangent lines always have  a point of intersection.
  In order to have a triangle $DEF$ containing in its interior the
three circles, we need to limit $\omega$ to less than
$180^{\circ}$.

Let us observe (see Figure~\ref{figureitc}(b)), that $\omega\ge 180^{\circ}$ if and only if $t$ is
smaller than the radius $w$ of a circle tangent to the bigger
circles and their common tangent line. By what we have observed
earlier, the radius $x$ must satisfy $2\sqrt{sw}+2\sqrt{rw}=2\sqrt{rs}$. This means that
$w=rs/(\sqrt{s}+\sqrt{r})^2=s/(\sqrt{\frac{s}{r}}+1)^2>
s/4$. So, the first restriction we need to have on these
numbers is that $t>w$, which attracts

\begin{equation}\label{feq}
t>\frac{rs}{(\sqrt{s}+\sqrt{r})^2}>\frac{s}{4}, \ \ \text{or}\ \
r<\frac{st}{(\sqrt{s}-\sqrt{t})^2} .
\end{equation}
We observe that the third tangent line, the one which does not intersect $\triangle ABC$,
denoted in Figure~\ref{figureitc}(a) by  $n$, is
ensured by (\ref{feq}) to intersect
$\overset{\leftrightarrow}{IJ}$ so we will let $D$ be the point of
their intersection. Let $L$ be the point of intersection of the parallel
to $m$ through $C$ with the radius corresponding to the tangency
point on $m$  and similarly on the other side we let $K$ be the analogous
point (Figure~\ref{figureitc}(a)).

Finally, to ensure that $m$ and $n$ intersect, on the same side of
$\overset{\leftrightarrow}{DE}$ as the circles, we need to have

$$m(\angle KBA)+m(\angle ABC)+m(\angle BCA)+m(\angle
ACL)<180^{\circ},$$

\n by the original Euclid's fifth postulate. This is equivalent
to

$$\arcsin\left(\frac{r-s}{r+s}\right)+\arcsin\left(\frac{r-t}{r+t}\right)<m(\angle BAC).$$

\n Because $u\mapsto \cos u $ is a decreasing function for $u\in
[0,180^{\circ}]$, using the law of cosines in the triangle $ABC$
and the formula $\cos(\alpha+\beta)=\cos \alpha\cos
\beta-\sin\alpha\sin \beta$, this last inequality translates into

$$\frac{(r+s)^2+(r+t)^2-(s+t)^2}{2(r+s)(r+t)}<\frac{2\sqrt{rs}}{r+s}\cdot \frac{2\sqrt{rt}}{r+t}-\frac{(r-s)(r-t)}{(r+s)(r+t)}.$$

\n After some algebra, one can reduce this to

\begin{equation}\label{seq}
r<2\sqrt{st}.
\end{equation}

\n Let us observe that
$2\sqrt{st}<st/(\sqrt{s}-\sqrt{t})^2$ is equivalent to
$2s+2t-5\sqrt{st}<0$ or $(2\sqrt{\frac{s}{t}}-1)(
\sqrt{\frac{s}{t}}-2)<0$. This is true under the necessary
condition $s<4t$. So, the existence of an encompassing triangle
around the three circles of radii $r$, $s$, $t$ satisfying $t<s<r$
is given by (\ref{seq}), and $s<4t$.

Without loss of generality, let us employ our model, in such a way that
$r=\alpha=y$ and $s=\beta=\frac{\sqrt{3}(1+x)-y}{2}$, and
$t=\gamma=\frac{\sqrt{3}(1-x)-y}{2}$.

\begin{figure}
\[
\underset{(a) \ Computer\ simulation }{\epsfig{file=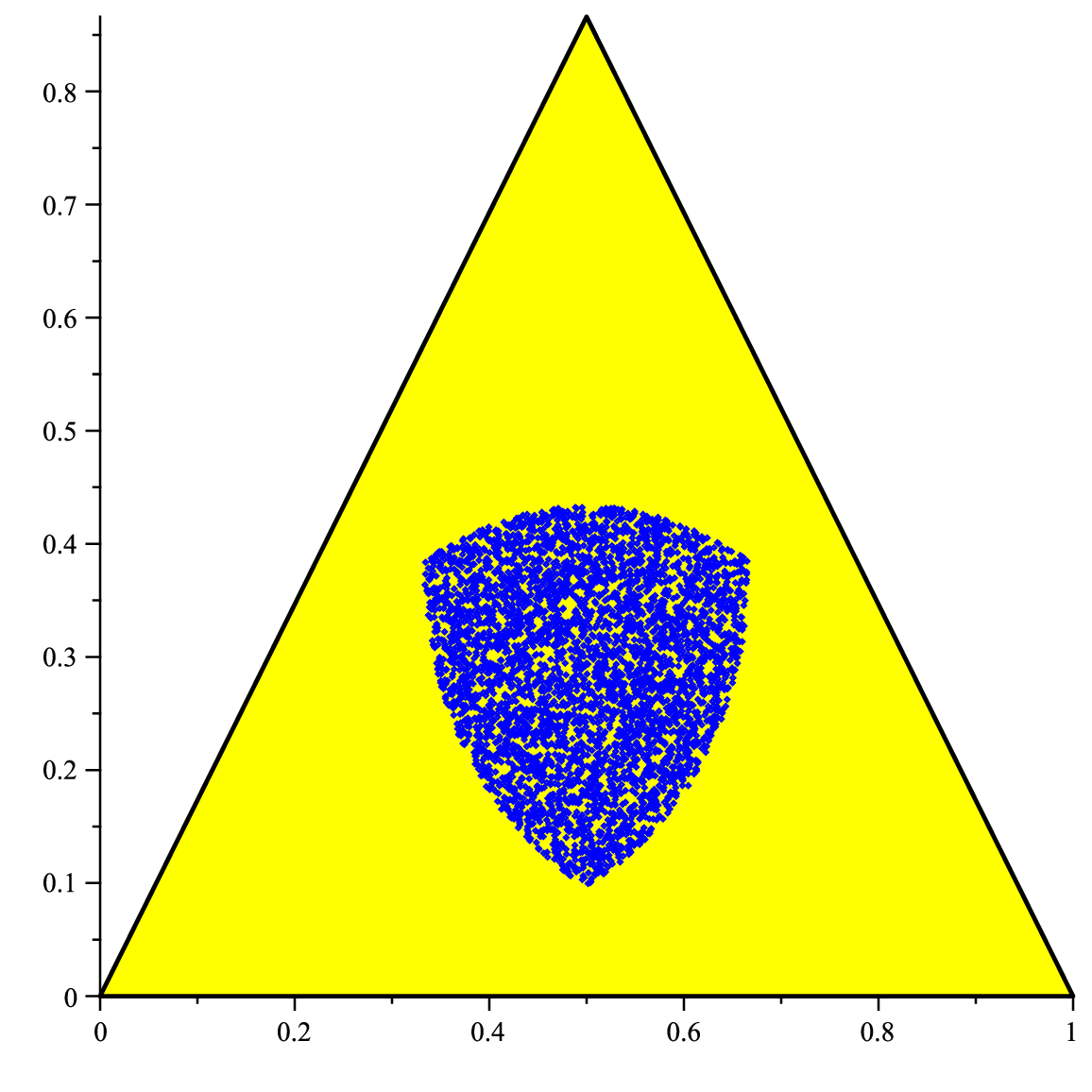,height=1.8in,width=2in}}\ \ \
\underset{(b)\
\ 0<x<\frac{1}{3},\ y<\frac{\sqrt{3}(1-x^2)}{2},\
y>\frac{1+x}{\sqrt{3}}
}{\epsfig{file=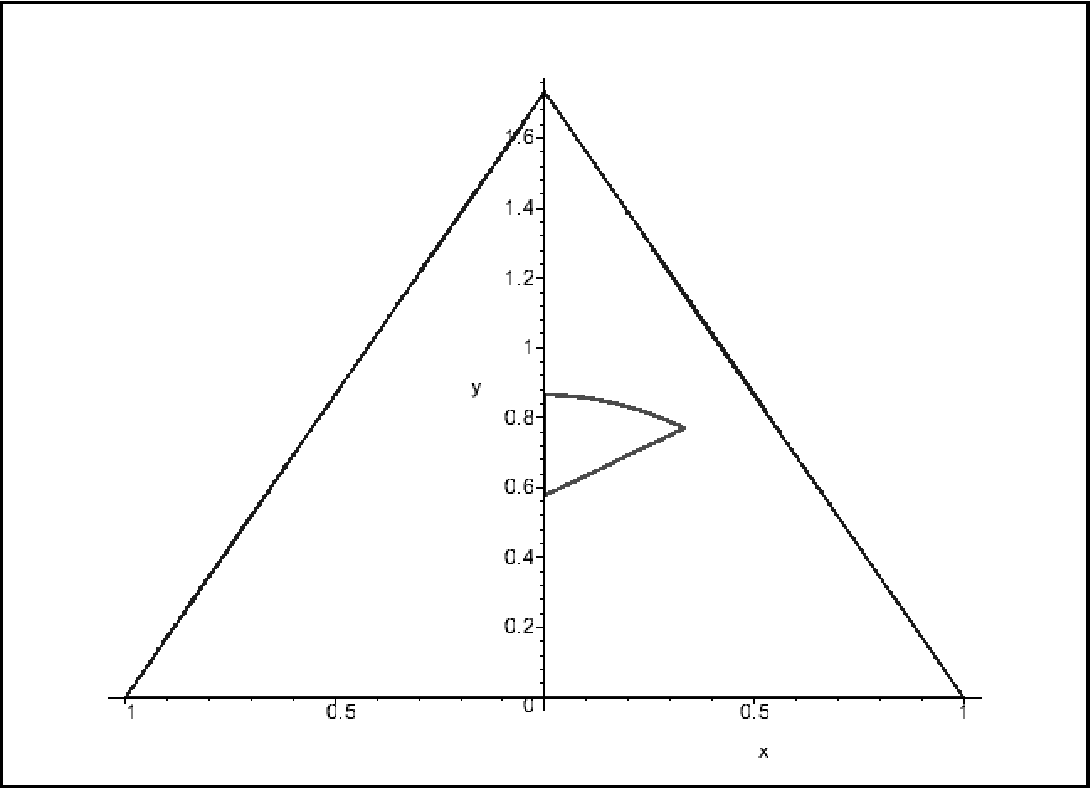,height=2in,width=2in}}
\]
\caption{\it $A(1,0)$, $B(-1,0$) and $C(0,\sqrt{3})$, $ON=t$,
$OM=s$, $OP=r$ }\label{fig9tic}
\end{figure}

The condition $t<s$ is equivalent to $0<x$ and the inequality
$s<r$ implies $y>(1+x)/\sqrt{3}$ (Figure~\ref{fig9tic}(b) ). The restriction
(\ref{seq}) is the same as $y<\frac{\sqrt{3}}{2}(1-x^2)$. Also,
let us observe that the last restriction $s<4t$ is equivalent to
$y<\frac{3-5x}{\sqrt{3}}$. It turns out that
$\frac{\sqrt{3}}{2}(1-x^2)<\frac{3-5x}{\sqrt{3}}$ is satisfied if
$x<1/3$ which is a restriction already given by the the other
inequalities we have (Figure~\ref{fig9tic}(b)). This gives

$$
\begin{array}{c}
\ds P=\frac{6}{\sqrt{3}}\int_0^{\frac{1}{3}}
\left[\frac{\sqrt{3}}{2}(1-x^2)-\frac{1+x}{\sqrt{3}}\right]dx=\\ \\
\ds \int_0^{\frac{1}{3}}(1-2x-3x^2)dx=(x-x^2-x^3)|_0^{1/3}=\frac{5}{27}.\ \ \ \bsq
\end{array}
$$

We note  that we have actually obtained the following:
{\it three circles of positive radii, $r,s,t>0$, allow the existence of a triangle as in Figure~\ref{figureitc}(a), if and only if,
$\max(r,s,t)^3< 4rst$.}

\vspace{0.2in}

For the case of acute triangles, one can check that it is necessary and sufficient that

$$\begin{array}{c}
2\sqrt{(r+s+t)rst}\le \\ \\ \min\{ 4t\sqrt{rs}-|(r-t)(s-t)|, 4r\sqrt{st}-|(r-t)(s-r)|,4s\sqrt{rt}-|(r-s)(s-t)|\}
\end{array}$$

\n condition which allowed us to compute the probability experimentally: $P(acute)\approx 0.047845$. This  gives
a ratio between the obtuse and the acute cases which is close to $3$  (see Figure~\ref{Figure9}(a) for the corresponding shape).

\begin{figure}
\[
\underset{(a)\  \text{Acute  case $r$, $s$, $t$ } }{\epsfig{file=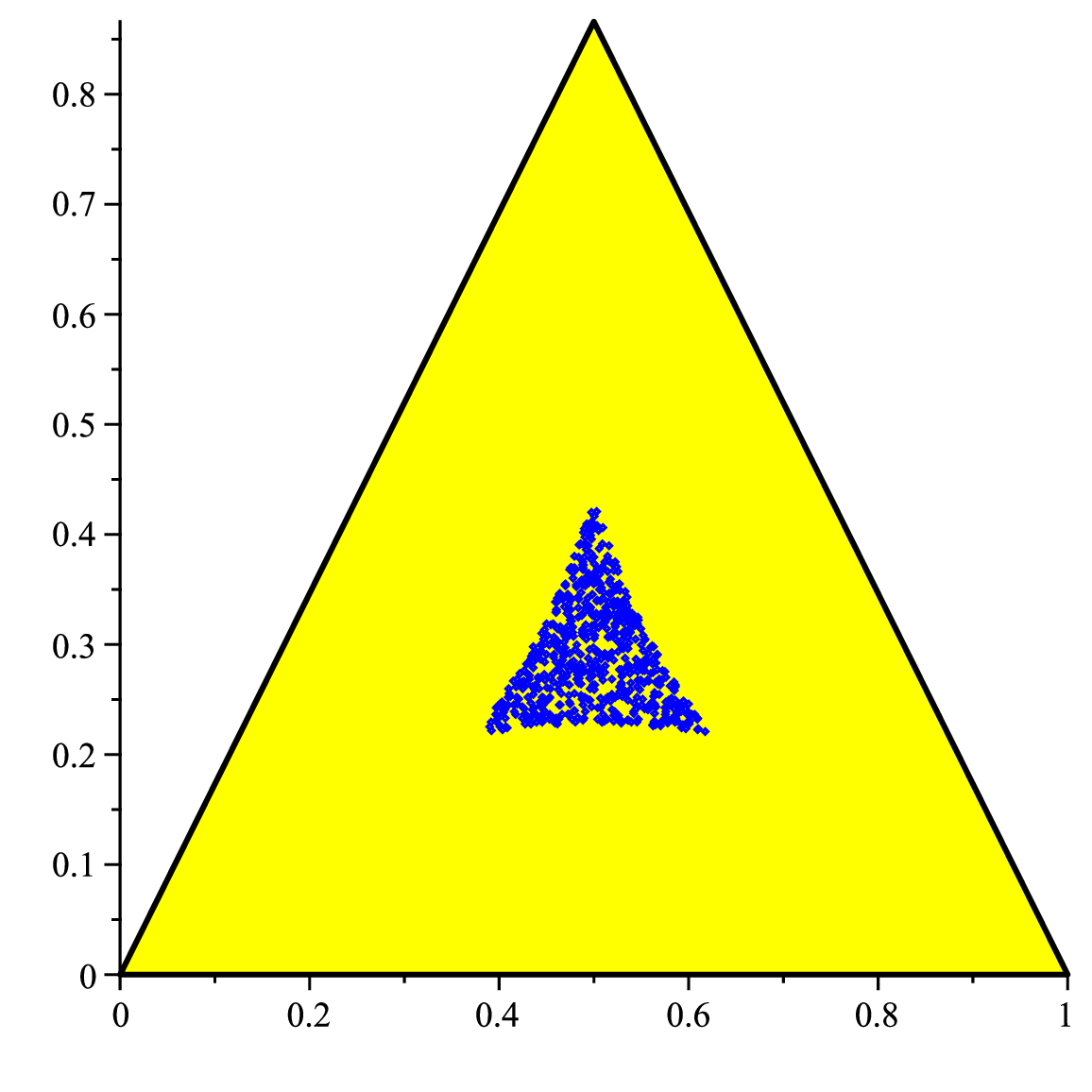,height=1.8in,width=2in}}\ \ \ \ \
\underset{(b)\ \text{Angle bisectors region for acute triangle};\  P\approx 0.1195.
}{\epsfig{file=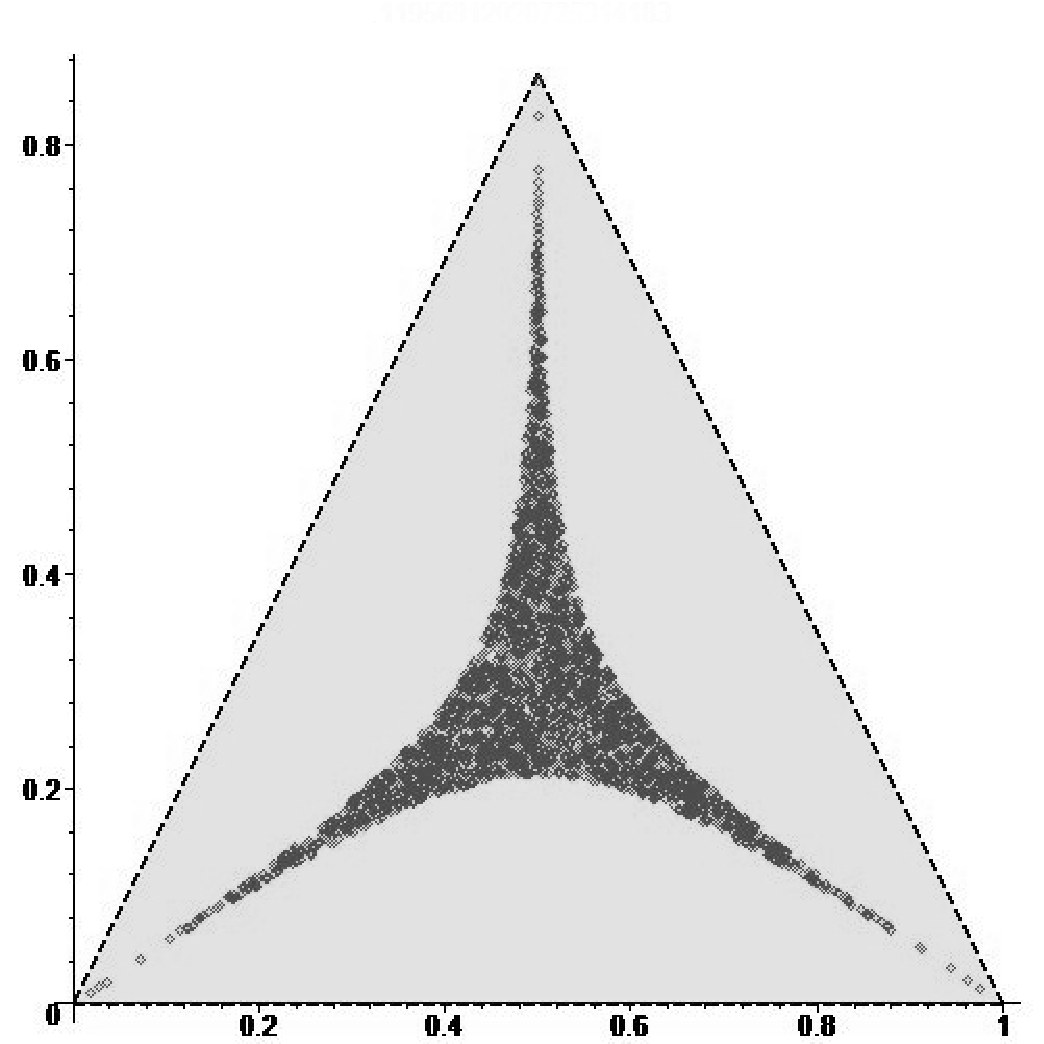, height=1.9in,width=1.9in}}
\]
\caption{\it Special\ Cases }\label{Figure9}
\end{figure}

\n\subsection{Radii of excircles.}

In Figure~\ref{figure5} (a) we
show the three excircles of a triangle. Let $r_a$, $r_b$, and
$r_c$ denote the radii of the excircles of a triangle $ABC$ that
are tangent to the sides $\overline{BC}$,  $\overline{AC}$, and
$\overline{AB}$ respectively.

\begin{figure}
\begin{center}
$\underset{(a)\ \text{Excircles}
}{\epsfig{file=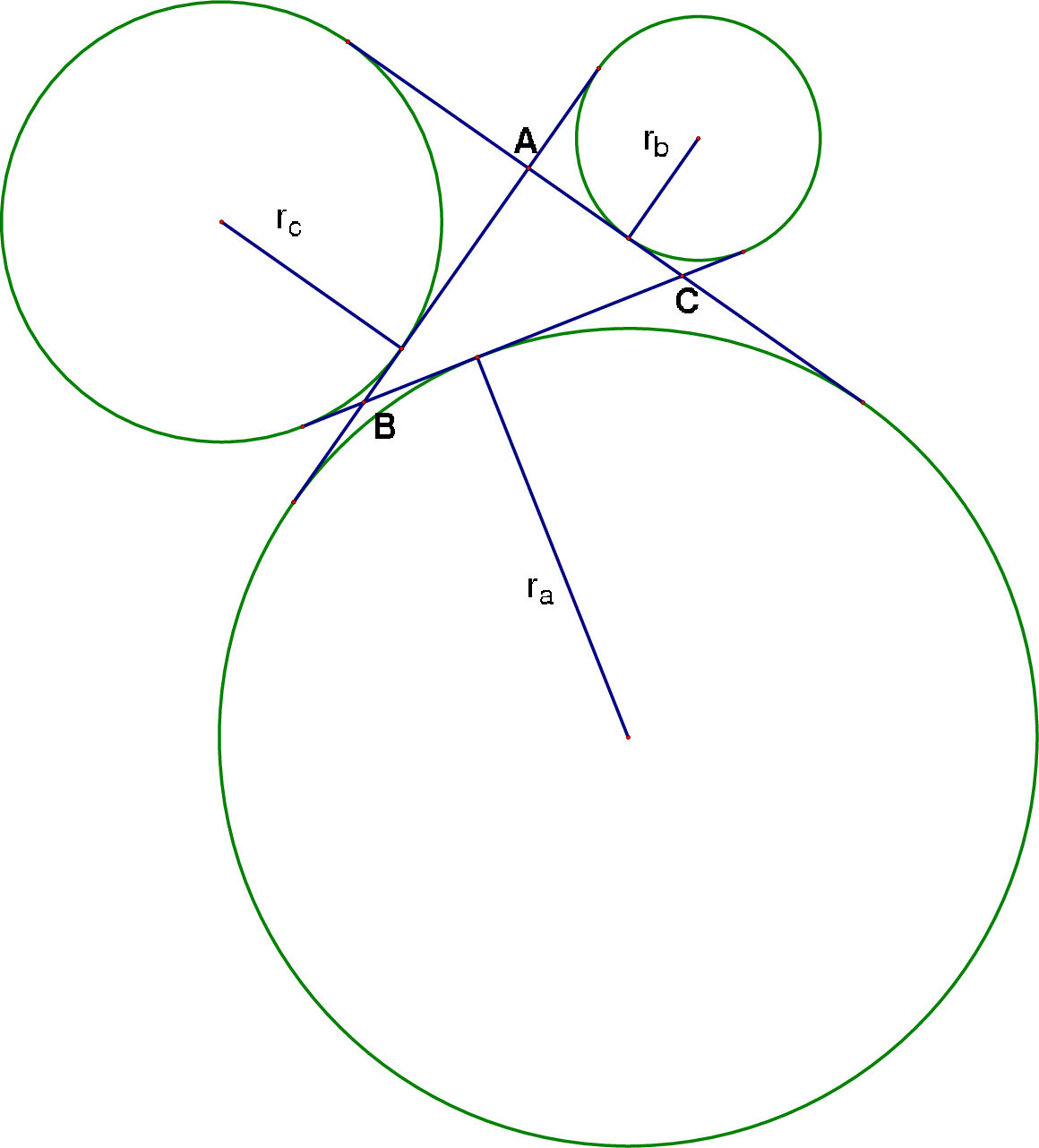,height=2in,width=2in}}$\ \ \
$\underset{(b)\ \text{Acute triangles with $\alpha$, $\beta$ and
$\gamma$ as radii of excircles}
}{\epsfig{file=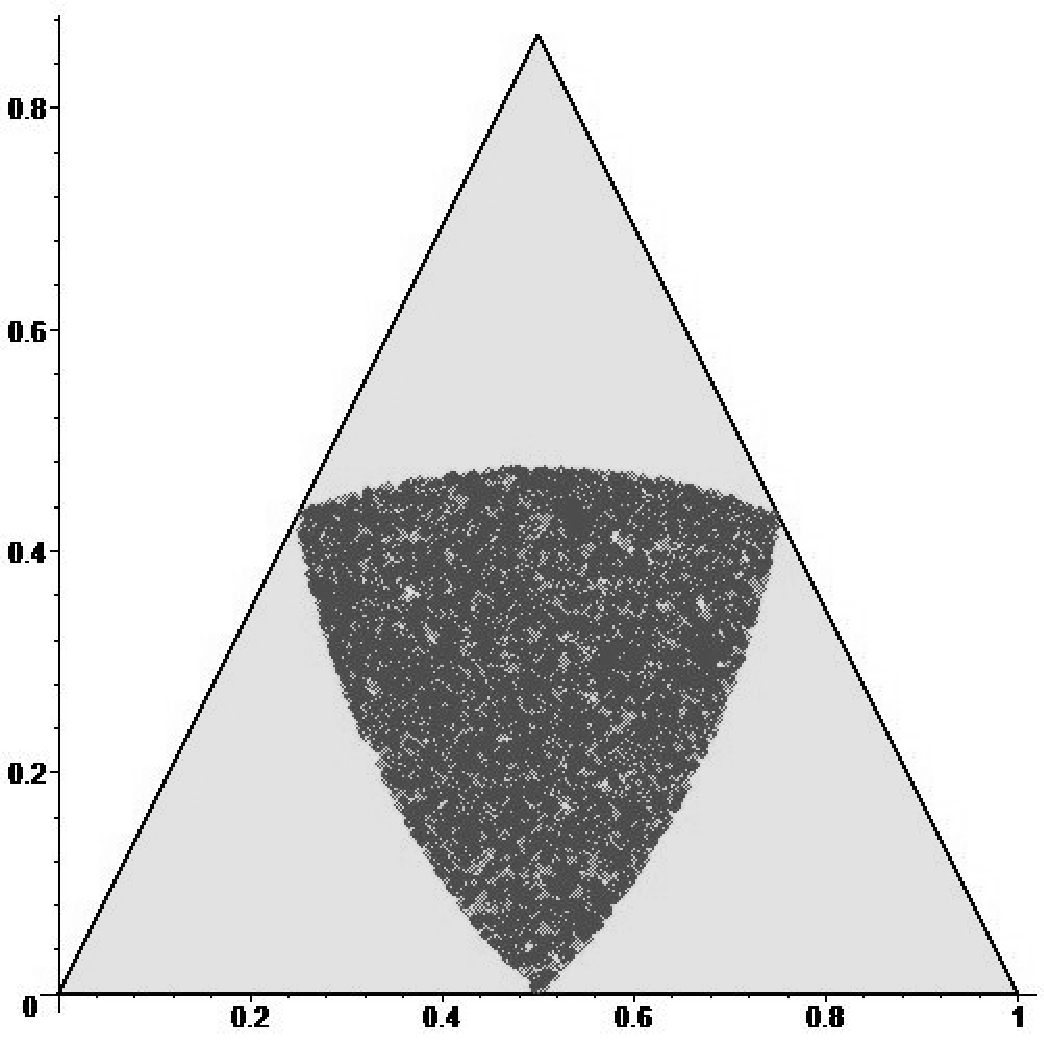,height=2in,width=2.1in}}$
\end{center}
\caption{Circles tangent to the sides}
\label{figure5}
\end{figure}

\begin{theorem}\label{radiiofexcircles}
If $u$, $v$, and $w$ are greater than $0$ then there is a unique
triangle such that $r_a = u$, $r_b = v$, and $r_c = w$. Moreover,
this triangle is acute if and only if $uv + vw + wu > \max \{ u^2,
v^2, w^2\}$.
\end{theorem}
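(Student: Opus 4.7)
The plan is to use the classical excircle formula $r_a = S/(s-a)$ together with Heron's formula to derive explicit symmetric expressions for the sides $a$, $b$, $c$ in terms of the prescribed radii $u$, $v$, $w$, then to read off existence, uniqueness and the acuteness condition by direct computation. The crucial identity is
$$r_a r_b = \frac{S^2}{(s-a)(s-b)} = \frac{s(s-a)(s-b)(s-c)}{(s-a)(s-b)} = s(s-c),$$
and cyclically $r_b r_c = s(s-a)$, $r_c r_a = s(s-b)$. Summing yields $s^2 = r_a r_b + r_b r_c + r_c r_a$. Then $s-a = r_b r_c/s$, and after analogous cyclic manipulations one obtains
$$a = \frac{u(v+w)}{s}, \quad b = \frac{v(u+w)}{s}, \quad c = \frac{w(u+v)}{s}, \qquad \text{where } s = \sqrt{uv+vw+wu}.$$

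For existence I will, given arbitrary $u,v,w>0$, simply \emph{define} $s,a,b,c$ by the displayed formulas. All three sides are positive, and the triangle inequality is immediate: for example $a+b-c = 2uv/s > 0$, and the other two by symmetry. To verify that the resulting triangle actually has the prescribed excircle radii, I will check via Heron that $S^2 = s \cdot (vw/s)(uw/s)(uv/s) = u^2v^2w^2/s^2$, so $S = uvw/s$, and therefore $S/(s-a) = (uvw/s)/(vw/s) = u$, with the other two radii handled cyclically. Uniqueness is built into the derivation: any triangle with the prescribed excircle radii must satisfy the formulas above, so $(a,b,c)$ is forced.

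For the acuteness part I will expand $a^2 + b^2 - c^2$ using the closed forms. Pulling out the common $s^{-2}$ and collecting,
$$a^2+b^2-c^2 = \frac{1}{s^2}\Bigl(u^2(v+w)^2 + v^2(u+w)^2 - w^2(u+v)^2\Bigr) = \frac{2uv}{s^2}\bigl(uv+vw+wu-w^2\bigr),$$
whose sign is governed entirely by $uv+vw+wu - w^2$ since $u,v>0$. The cyclically analogous factorizations handle the other two angles, so the triangle is acute if and only if $uv+vw+wu$ exceeds each of $u^2,v^2,w^2$, i.e., iff $uv+vw+wu > \max(u^2,v^2,w^2)$.

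The only step where something could plausibly go wrong is the algebraic factorization in the last display — it is clean but requires care to group terms correctly. Everything else is a mechanical consequence of the identity $r_a r_b = s(s-c)$, so I expect no real obstacles.
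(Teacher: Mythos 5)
Your proof is correct and takes essentially the same route as the paper: both invert the excircle relations via Heron's formula to reach the explicit sides $a = u(v+w)/\sqrt{uv+vw+wu}$ (and cyclic analogues), get the triangle inequality from $a+b-c = 2uv/s>0$, and read off acuteness from the factorization $a^2+b^2-c^2 = \tfrac{2uv}{s^2}\bigl(uv+vw+wu-w^2\bigr)$, which does check out. Your intermediate identity $r_a r_b = s(s-c)$, giving $s^2 = uv+vw+wu$ at once, is a slightly tidier path to the side formulas than the paper's adding-and-multiplying of the system $b+c-a=2S/u$, etc., and your explicit verification that the constructed triangle really has excircle radii $u$, $v$, $w$ makes the existence step fully airtight where the paper leaves it implicit.
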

\begin{proof}
In any triangle we have
\[
r_a = \frac{2S}{b + c - a}, \ \ \ r_b = \frac{2S}{a + c - b},\ \ \
\text{and} \ \ \ r_c = \frac{2S}{a + b - c}.
\]

\n Hence, we need to show that the system
\[
\frac{2S}{b + c - a} = u, \ \frac{2S}{a + c - b} = v,\ \frac{2S}{a
+ b - c} = w
\]
or
\[
b + c - a = \frac{2S}{u}, \ a + c - b = \frac{2S}{v},\ a + b - c =
\frac{2S}{w},
\]

\n has a unique solution for $a$, $b$, and $c$. Adding the three
equations above gives $a+b+c= 2S \frac{uv+vw+wu}{uvw}$.
Multiplying this last equality by the previous three, we obtain
\[
(a + b + c)(b + c - a)(a + c - b)(a + b - c) = 16 S^4
\frac{uv+vw+wu}{u^2v^2w^2}
\]
\n from which, using Heron's formula, we get
\[
16 S^2 = 16 S^4 \frac{uv+vw+wu}{u^2v^2w^2} \Rightarrow S =
\frac{uvw}{\sqrt{uv+vw+wu}}.
\]
This changes the previous system for $a$, $b$, and $c$ into
\[
a + b - c = \frac{2 uv}{\sqrt{uv+vw+wu}},\  a + c - b =
\frac{2uw}{\sqrt{uv+vw+wu}},\ \text{and} \]

\[\ b + c - a = \frac{2vw}{\sqrt{uv+vw+wu}}
\]
which, by adding pairs of these equalities, provides the solutions
\[
a = \frac{uv+uw}{\sqrt{uv+vw+wu}}, \ b =
\frac{uv+vw}{\sqrt{uv+vw+wu}}, \ \text{and}\ c =
\frac{uw+vw}{\sqrt{uv+vw+wu}}.
\]
It it clear that these solutions satisfy the triangle inequality,
so this proves the existence and uniqueness stated in the first
part of our theorem.

\par Next, the triangle is acute if and only if
\[
a^2 + b^2 > c^2, \quad a^2 + c^2 > b^2 \quad \textup{and} \quad
b^2 + c^2 > a^2
\]
which is equivalent to
\[
uv + vw + u w > u^2, \quad u v + v w + w u > v^2 \quad
\textup{and} \quad  u v + v w + w u > w^2.
\]
The last three inequalities can be put together as in our
statement of the theorem. \end{proof}

\begin{corollary}\label{exteriorcircles}  {\it Under the hypothesis of the broken stick problem,
the probability that the three segments are the radii of the
excircles of a triangle is equal to 1. Moreover, the probability
that this triangle be an acute triangle is:
$$ P=\frac{24\sqrt{7}}{49}\arcsin(\frac{\sqrt{14}}{8})-\frac{2}{7}\approx
0.3449830931 $$}
\end{corollary}

\begin{proof} By the second part of Theorem~\ref{radiiofexcircles}
we need to calculate the area of the region characterized by
$$\alpha\beta+\beta\gamma+\gamma\alpha>max(\alpha^2,\beta^2,\gamma^2).$$

Let us look at one inequality, say
$\alpha\beta+\beta\gamma+\gamma\alpha>\alpha^2$. With the
substitutions from (\ref{definitionofalphabetagamma}), this
becomes
$$|7y-\sqrt{3}|< \sqrt{3(8-7x^2)}.$$ If we plot all the
corresponding ellipses, we get a picture as in
Figure~\ref{figure5} (b) (the boundary of the shaded region).  It
is easy to see that the ellipse above cuts the sides of the
equilateral triangle exactly in half. Hence the probability we are
looking for is equal to

$$P=\frac{1}{\sqrt{3}}
\left( \frac{\sqrt{3}}{4}+
3\int_{-\frac{1}{2}}^{\frac{1}{2}}\frac{\sqrt{3}+\sqrt{3(8-7x^2)}}{7}-\frac{\sqrt{3}}{2}dx\right)$$

or

$$P=\frac{6}{7}\int_{-\frac{1}{2}}^{\frac{1}{2}}\sqrt{8-7x^2}dx-\frac{23}{28}.$$

Finally, this gives
$P=\frac{24\sqrt{7}}{49}\arcsin(\frac{\sqrt{14}}{8})-\frac{2}{7}\approx
0.3449830931$.\end{proof}
\section{Special Cases}

Although there are possibly other problems in which exact answers
could be found we move on to some other surprising results.

\subsection{Angle Bisectors}\label{AngleBisectors}

Due to a paper of Mironescu and Panaitopol \cite{mp}, we know that the
probability that a triangle $ABC$ exists so that $\alpha=w_a$,
$\beta=w_b$ and $\gamma=w_c$, is 1. It is important to mention
that the existence problem involved in this result had been open
since 1875. The proof in  \cite{mp} is based on  Brouwer's fixed point
theorem. We used the contractive map described in this work
and built a Maple program that tested the condition of obtaining
an acute triangle. The region and the frequency obtained from
using 50,000 randomly selected points in our model (generated by
picking at random with uniform distribution from 500 points on the
two sides as described in the Introduction), with a stopping error
for iterations of 0.0001 are shown in the Figure~\ref{Figure9}(b).
We tried to determine the equations of the boundary for the
region in Figure~\ref{Figure9}(b) which corresponds to right
triangles. Our direct approach was less successful in this case,
since the equation of the boundary involved the two angle
bisectors $u$ and $v$ (the angle bisector from the right angle is
assumed to be equal to one, $w=1$), and a root $r$  of the sixth
degree equation (in $Z$)

$$8Z^6u^2-8\sqrt{2}u^2Z^5-8(u^2-1)Z^4+8\sqrt{2}u^2Z^3-8Z^2+1=0$$

\n in a twenty six term polynomial of degree ten (in $\mathbb
Z[\sqrt{2}][u,v,r]$).

\subsection{Altitude, angle bisector and a median}\label{aabm}

\begin{theorem}\label{theoremabm}
If $0 < u < v < w$ then there is a triangle such that the
altitude, the angle  bisector, and the median from one of the
vertices of the triangle equal $u$, $v$, and $w$ respectively.
Moreover, this triangle is acute if and only if
\begin{equation}\label{abm}
\frac{u\sqrt{v^4-3u^2(v^2-u^2)}}{2u^2-v^2}<
w<\frac{uv^2}{2u^2-v^2}.
\end{equation}
\end{theorem}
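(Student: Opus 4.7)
The plan is to work in coordinates adapted to the distinguished vertex $A$. Place $A=(0,u)$ with the line $BC$ along the $x$-axis, so that the foot of the altitude from $A$ is automatically $H=(0,0)$ and $AH=u$. The foot of the angle bisector $T$ and the midpoint $M$ of $BC$ then lie on the $x$-axis at distances $\sqrt{v^2-u^2}$ and $\sqrt{w^2-u^2}$ from $H$. Invoking the classical fact that $T$ always lies between $H$ and $M$ on line $BC$, we may put both on the positive ray at $p:=\sqrt{v^2-u^2}$ and $q:=\sqrt{w^2-u^2}$, with $0<p<q$ coming from $u<v<w$.

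The only remaining freedom is the length $|BC|=2s$, so write $B=(q-s,0)$ and $C=(q+s,0)$; this automatically gives $AM=w$, while the angle bisector condition $BT/TC=AB/AC$ reads
$$(s-d)^{2}\bigl[(q+s)^{2}+u^{2}\bigr]=(s+d)^{2}\bigl[(q-s)^{2}+u^{2}\bigr],\qquad d:=q-p.$$
Expanding, the degree-three and degree-four terms in $s$ cancel in pairs and the identity collapses to the single relation
$$s^{2}=\frac{(q-p)(pq+u^{2})}{p}.$$
The right-hand side is positive because $0<p<q$, so $s$ is well defined; moreover $s^{2}-d^{2}=d\,u^{2}/p>0$, so $B$ and $C$ lie on opposite sides of $T$ and the triangle is non-degenerate. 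This produces a triangle realizing $(u,v,w)$ as altitude, angle bisector, and median from $A$.

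For the acuteness criterion, read the three cosines directly off the coordinates. The angle at $C$ is always acute (the dot product $\vec{CA}\cdot\vec{CB}=2s(q+s)>0$), while the angle at $B$ is acute iff $s>q$ and the angle at $A$ is acute iff $s^{2}<q^{2}+u^{2}=w^{2}$. Substituting the formula for $s^{2}$ and using $p^{2}=v^{2}-u^{2}$, the inequality $s^{2}>q^{2}$ simplifies to $q(2u^{2}-v^{2})>pu^{2}$; this forces $2u^{2}>v^{2}$ and, once solved for $w=\sqrt{q^{2}+u^{2}}$, produces exactly the lower bound in (\ref{abm}) via the identity $(2u^{2}-v^{2})^{2}+u^{2}(v^{2}-u^{2})=v^{4}-3u^{2}(v^{2}-u^{2})$. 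Similarly, $s^{2}<w^{2}$ simplifies to $q(2u^{2}-v^{2})<2pu^{2}$, and the cleaner identity $(2u^{2}-v^{2})^{2}+4u^{2}(v^{2}-u^{2})=v^{4}$ turns this into the upper bound in (\ref{abm}).

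The one real obstacle is the middle step: the squared angle bisector equation at first glance looks like a quartic in $s$, and the crucial observation is that, after cancellation, everything collects into a single quadratic in $s^{2}$ with a clean closed-form solution. Once that formula is in hand, both the existence statement and the two acuteness inequalities are a matter of careful but routine substitution and simplification.
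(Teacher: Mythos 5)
Your proposal is correct and takes essentially the same route as the paper: your $(p,q,s)$ are the paper's $(\delta-\omega,\ \delta,\ t)$, your relation $s^2=\frac{(q-p)(pq+u^2)}{p}$ is exactly the paper's $t^2=\omega\delta+\frac{\omega u^2}{\delta-\omega}$, and the three acuteness checks reduce to the same inequalities $q(2u^2-v^2)>pu^2$ and $q(2u^2-v^2)<2pu^2$. One harmless slip: $s^2-d^2=\frac{d\,v^2}{p}$ (not $\frac{d\,u^2}{p}$), which is still positive, so the non-degeneracy conclusion stands.
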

\begin{proof}
Let $\overline{AD}$ be a segment of length $u$. We construct the
perpendicular at $D$ on $\overline{AD}$. Let $N$ be a point on
this perpendicular such that the length of $\overline{AN}$ is $v$
and $M$ a point on the same perpendicular such that
$\overline{AM}$ has length $w$ and $N$ is between $D$ and $M$ (as
in Figure~\ref{Figure7} (b)).

\begin{figure}
\begin{center}
$\underset{(a)\ Orthocenter,\ \{\alpha,\ \beta,\ \gamma\}=\{\ HD,
\ HE,\ HF\}}{\epsfig{file=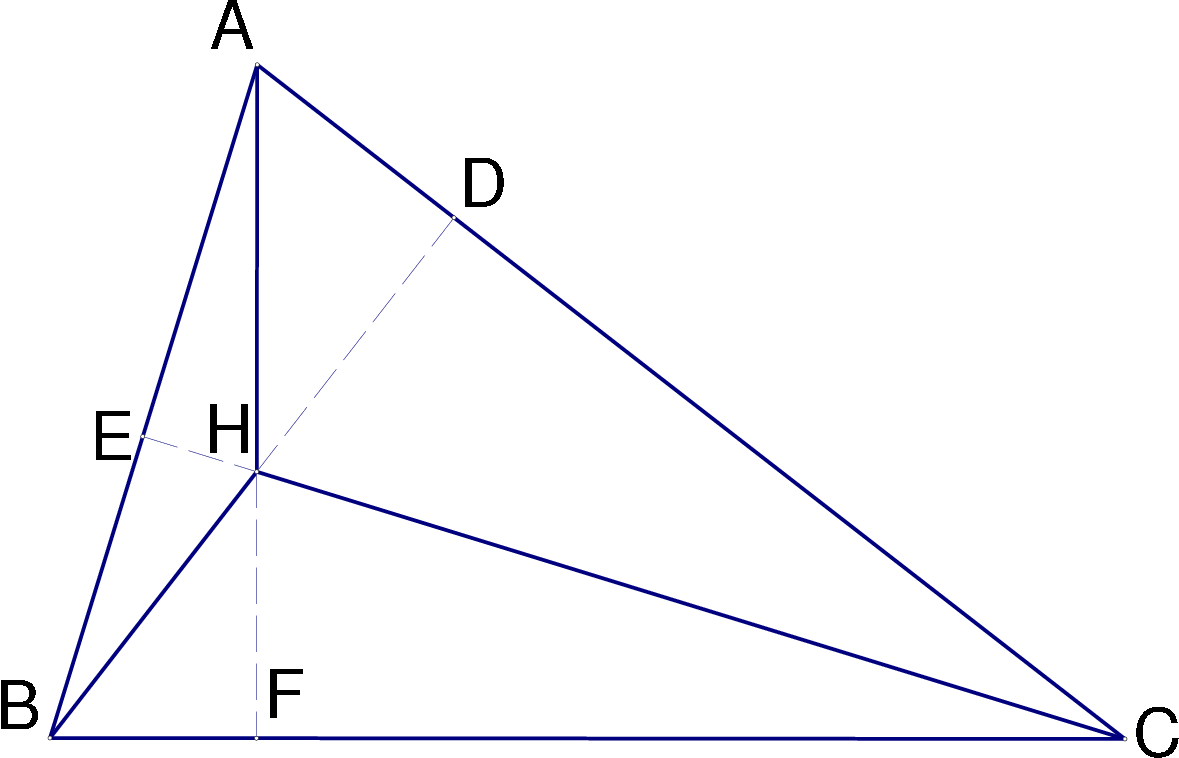,height=1.7in,width=2.5in}}$\
\ \ \ $\underset{(b)\ \{\alpha,\ \beta,\ \gamma\}=\{\ h_a, \
w_a,\ m_a\} }{
\epsfig{file=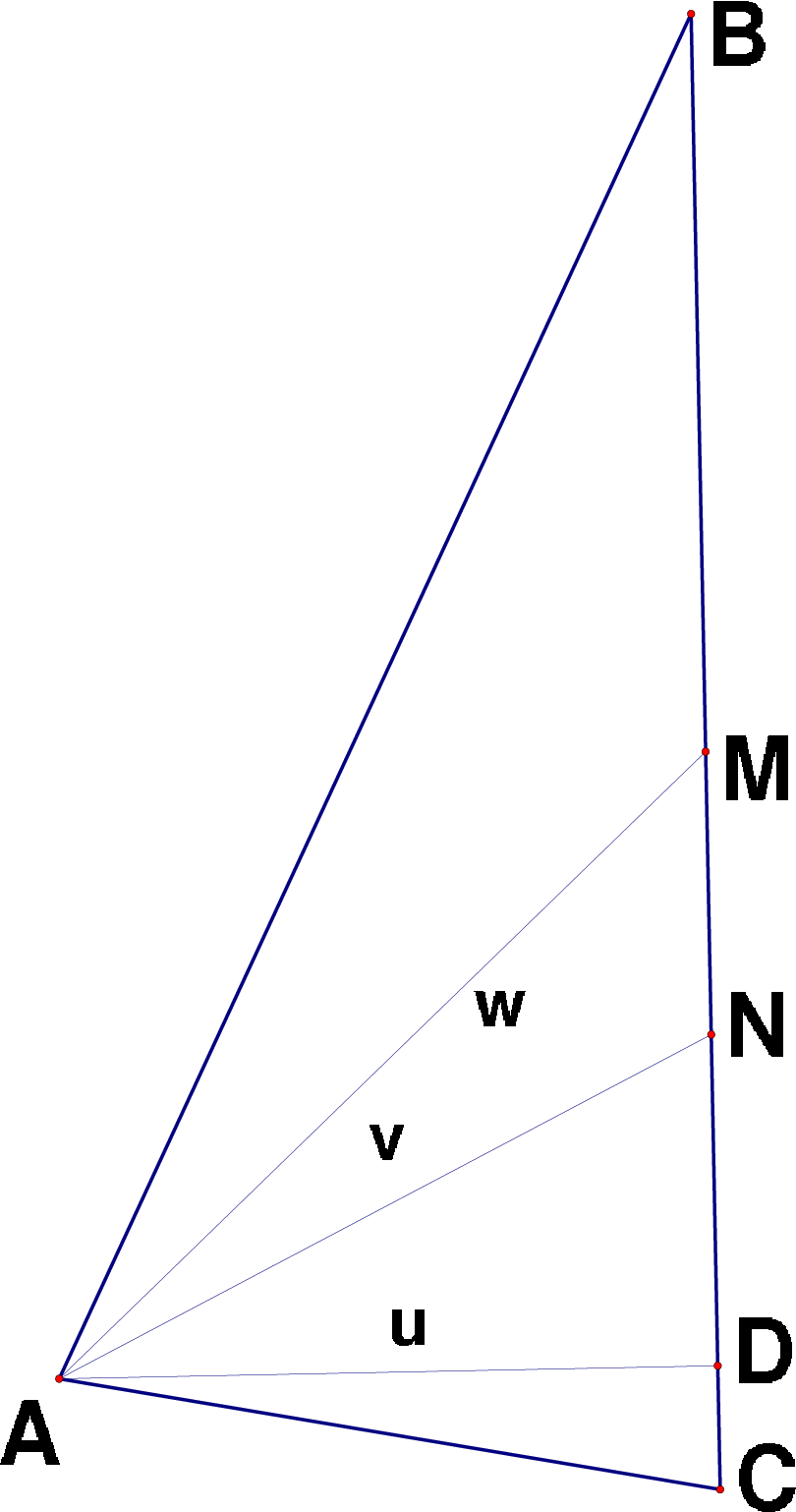,height=2.1in,width=1.6in}}$
\end{center}
\caption{Triangle\ ABC, $h_a=u$,\ $w_a=v$, and \ $m_a=w$.}
\label{Figure7}
\end{figure}

Our goal is to find two points $B$ and $C$ on
$\overset{\leftrightarrow}{ DM}$ such that in the triangle $ ABC$,
$\overline{AD}$  is an altitude, $\overline{AN}$ is an angle
bisector, and $\overline{AM}$ is a median. Let $B$ be a point on
$\overset{\leftrightarrow}{DM}$ such that $M$ is in between $N$
and $B$, and $C$ a point on $\overset{\leftrightarrow}{ DM}$ such
that $M$ is the midpoint of $\overline{BC}$. In the triangle $
ABC$ just obtained, it is clear that $\overline{AD}$ is an
altitude and $\overline{AM}$ is a median. We need to show that we
can move $B$ and $C$ to such positions that will also make
$\overline{AN}$ an angle bisector. To simplify the computation we
will denote the length of $\overline{BM}$ and $\overline{CM}$ by
$t$, the length of $\overline{DM}$ by $\delta$ and the length of
$\overline{MN}$ by $\omega$. Then
\[
AB^2 = u^2 + (t + \delta)^2 \qquad \textup{and} \qquad AC^2 = u^2
+ (t - \delta)^2
\]
while, by the Angle Bisector Theorem,
\[
\frac{AB}{AC} = \frac{BN}{NC} = \frac{t + \omega}{t - \omega}.
\]

\n Thus, we get the equation
\[
\frac{(t + \omega)^2}{(t - \omega)^2} = \frac{u^2 + (t +
\delta)^2}{u^2 + (t - \delta)^2}
\]
\n which, after some simplifications, becomes
\[
4 t \omega u^2 = 4 t (\delta - \omega) (t^2 - \omega \delta) \iff
t^2 = \omega \delta + \frac{\omega u^2}{\delta - \omega}.
\]
This shows that there is a unique solution to this problem.

For the second part of the theorem, note that we always have $AB^2
+ BC^2 - AC^2 > 0$ since $AB > AC$. In other words, by
construction we automatically have   $\angle B<90^{\circ}$. Angle
$\angle C<90^{\circ}$ if and only if $AC^2 + BC^2 - AB^2 > 0$.
This is the same as $t > \delta$ or $\omega u^2 - \delta (\delta -
\omega)^2 > 0$.

Looking back, where we introduced the notation, we see that
\[
\delta = \sqrt{w^2 - u^2} \qquad \textup{and} \qquad \omega =
\sqrt{w^2 - u^2} - \sqrt{v^2 - u^2}.
\]

\n Using these expressions  the above inequality  becomes

$$(2u^2-v^2)\sqrt{w^2-u^2}-u^2\sqrt{v^2-u^2}>0.$$

\n It is clear that if $2u^2-v^2\le 0$ the above  inequality is
false. So, we need to have $2u^2>v^2$ and under this assumption
the inequality above is the same as

$$(2u^2-v^2)^2w^2>u^2\left(v^4-3u^2(v^2-u^2)\right).$$

\n This shows that the first inequality in (\ref{abm}) must be
true if the triangle $ABC$ is an acute triangle.

Angle $ A<90^{\circ}$ if and only if $AB^2 + AC^2 - BC^2
> 0$, and because the length of BC is $2 t$, this is simply
equivalent to $u^2 + \delta^2 - t^2 > 0$. After substitution for
$t$, this becomes

\[ \delta (\delta - \omega)^2 + u^2 (\delta - 2
\omega) > 0.
\]

\n By substitution as before, this becomes

\[
\sqrt{w^2 - u^2} (v^2 - u^2) +u^2 (2 \sqrt{v^2 - u^2} - \sqrt{w^2
- u^2}) > 0 \iff \]
\[2 u^2 \sqrt{v^2
- u^2}>(2u^2-v^2) \sqrt{w^2 - u^2}.
\]
 So, since we may assume $v^2< 2u^2$,
the inequality above becomes the same as

$$w<\frac{uv^2}{2v^2-w^2}.$$
We notice that the condition $2u^2>v^2$ is implicitly assumed true
if (\ref{abm}) is satisfied.  Hence, we have shown the necessity
and the sufficiency of the conditions (\ref{abm}) in the theorem.
\end{proof}

\n {\bf Remark:} One can check that
$\frac{u\sqrt{v^4-3u^2(v^2-u^2)}}{2u^2-v^2}>b$ is equivalent to
$v^2<3u^2$ and so the restrictions (\ref{abm}) are always
non-trivial.

\vspace{0.1in}

\begin{corollary}\label{probabilityabm}
Assuming that $u$, $v$ and $w$ in Theorem~\ref{theoremabm} are the
ordered triple given by a broken stick, the probability, that the
triangle insured by Theorem~\ref{theoremabm} is acute, equals

$$m\int_{\frac{m}{2\sqrt{2}+1}}^{m/3} (1-g(s))^2- \frac{4(7s^2+2ms-3)^2}{(3ms^3+5s^2+3ms-3)^2} ds\approx
0.04223393591,$$

\n where $m=\sqrt{3}$ and $g$ is defined by
$g(t)=\frac{(t-m)A(t)+2t\sqrt{B(t)}} {(t+m)A(t)+2t\sqrt{B(t)}}$
with $$\begin{cases}
A(t)=7t^2+2mt-3\\ \\
B(t)=37t^4+20mt^3-18t^2-12mt+9.
\end{cases}$$
\end{corollary}

We will include just the idea of proof for this corollary because
the calculations are very cumbersome. However, one can check them
with a symbolic algebra program such as Maple or Mathematica.

Here our  idea is basically the same as in all of the previous
problems. Depending of the order of the $\alpha$, $\beta$ and
$\gamma$, there are six possible regions in  our model. We are
going to pick one of them, say, $\alpha<\gamma<\beta$ and then the
values of $u$, $v$ and $w$ are given as in the Introduction (see
Figure~\ref{Figure1}~(a)), in terms of $x$ and $y$, by $u=\alpha$,
$w=\beta$,  and $v=\gamma$ defined in
(\ref{definitionofalphabetagamma}).

\begin{figure}
\begin{center}
\epsfig{file=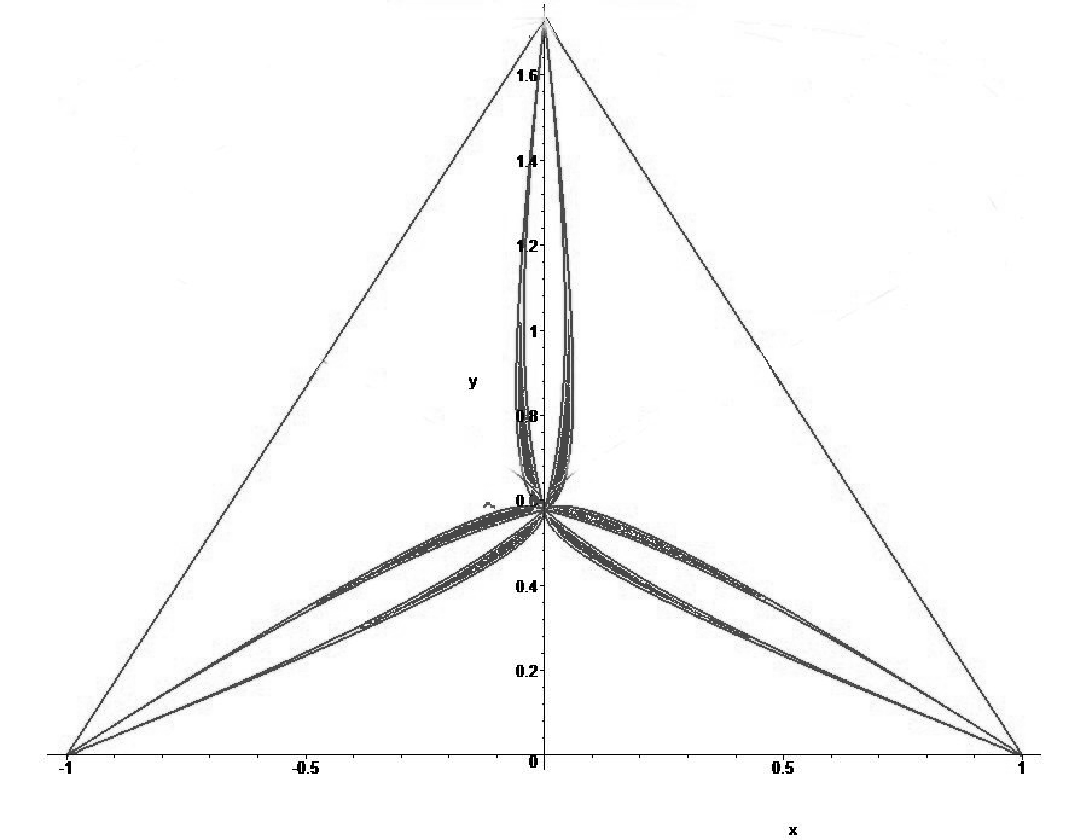,height=2.5in,width=3in}
\end{center}
\caption{Region \ defining\ the \ acute\ triangle.}
\label{Figure8}
\end{figure}

Taking into account symmetries, the two inequalities in
(\ref{abm}) define the region depicted in Figure~\ref{Figure8}. We
are going to concentrate only on one sixth of the picture. The
conditions $0<u<v<w$ are equivalent to $x>0$, $y>0$, and
$y<\frac{\sqrt{3}}{3}(1-x)$.

In order to obtain something that we can integrate we need to
parameterize the two resulting curves from (\ref{abm}). The idea
is to make the substitution $y=t(1-x)$ which will considerably
simplify the equations of the two curves. This is a standard
procedure of rationalizing a curve if one knows a point with
integer coordinates on it (see \cite{s}). The inequality
$w(2u^2-v^2)<uv^2$ turns into

$$x(9t^3+5t^2m+9t-3m)-9t^3+9t^2m +3t-3m<0$$

\n or

$$x<f(t):=\frac{9t^3-9t^2m -3t+3m}{9t^3+5t^2m+9t-3m}.$$

Let us  observe that $t$ is less than $\frac{m}{3}$. One the other
hand it is obvious that we need to have $v<u\sqrt{2}$, which boils
down to $y>n(1-x)$ where $n:=\frac{m}{2\sqrt{2}+1}$. So, in the
above inequality involving $x$, the range of $t$ is $[n,m/3]$. One
can check that $f$ is well defined on this interval. In addition,
$f(n)=1$ and $f(m/3)=0$. The other inequality in (\ref{abm})
reduces to $x>g(t)$ with $g$ defined as in the statement. We
denote by $\mathcal R$ the region we are interested in, i.e. the
right-hand petal going down in Figure~\ref{Figure8}. The Jacobian
of the transformation $(x,y)\to (x,t)$ is $J=1-x$ and so

$$Area(\mathcal R)=\underset{\mathcal R}{\int\int}dxdy=\int_n^{m/3}\int_{g(t)}^{f(t)}(1-x)dxdt=\frac{1}{2}\int_n^{m/3}(1-g(t))^2-(1-f(t))^2dt$$

\n which implies

$$P=Area(\mathcal R)/(\frac{m}{6})=m\int_n^{m/3}(1-g(t))^2-(1-f(t))^2dt.$$

\subsection{Distances to the sides from the circumcenter.}\label{doa}

In this section there will be no need to compute any probabilities.
Part of the next theorem appeared as a proposed problem in \cite{eji1}.

\begin{theorem}\label{dtosidesdfromcirc}  Consider a  triangle $ABC$ and let
$O$ be its  circumcenter. Denote the distances of $O$ to the sides
$\overline{BC}$,  $\overline{AC}$, and $\overline{AB}$, by $u$,
$v$ and $w$ respectively.

{\rm (i)} The radius $R$, of the circle circumscribed to the
triangle $ABC$, satisfies the equation
\begin{equation}\label{maineq}
R^3-(u^2+v^2+w^2)R-2uvw=0,\ if\ \triangle ABC\ is \ acute;
\end{equation}

\begin{equation}\label{maineq2}
R^3-(u^2+v^2+w^2)R+2uvw=0,\ if\ \triangle ABC\ is \ obtuse;
\end{equation}

\n  and (obviously)

\begin{equation}\label{maineq3}
R=(u^2+v^2+w^2)^{\frac{1}{2}},\ if\ \triangle ABC\ is\ a\ right \
triangle.
\end{equation}

{\rm (ii)} Given three positive  real numbers $u$, $v$, and $w$,
there exists one and only one acute triangle with the distances of
the circumcenter to the sides equal to  $u$, $v$ and $w$. The
previous statement is true if one changes the adjective acute to
obtuse.

{\rm (iii)} The equation {\rm (\ref{maineq})} has infinitely many
integer solutions $(u,v,w,R)\in \mathbb N^4$ such that $u$, $v$,
and $w$ are all different.
\end{theorem}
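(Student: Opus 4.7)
The plan is to exploit the homogeneity of equation (\ref{maineq}) together with a single explicit rational solution. Since
$R^3-(u^2+v^2+w^2)R-2uvw$ is homogeneous of degree three in $(u,v,w,R)$, any rational solution $(u_0,v_0,w_0,R_0)$ with $u_0,v_0,w_0$ pairwise distinct immediately produces infinitely many integer solutions: first clear denominators to obtain an integer quadruple, then scale by $k\in \mathbb N$. So the whole problem reduces to writing down \emph{one} rational acute solution of (\ref{maineq}) in which the three coordinates $u,v,w$ happen to be distinct.

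To manufacture such a solution I would use the standard identification coming from part (i): in an acute triangle the distance from the circumcenter to the side opposite $A$ equals $R\cos A$, and similarly for $B,C$ (this is seen from the isosceles triangle $OBC$ with central angle $2A$). Hence $(u,v,w,R)=(R\cos A,R\cos B,R\cos C,R)$ automatically satisfies (\ref{maineq}) for every acute triangle $ABC$. It therefore suffices to find one acute triangle for which $R,\cos A,\cos B,\cos C$ are all rational. Heronian triangles (integer sides $a,b,c$ and integer area $S$) are perfect candidates: the law of cosines makes each cosine rational, and $R=abc/(4S)$ is rational as well.

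The classical $(13,14,15)$ triangle is acute (since $15^2<13^2+14^2$), has area $84$, and has three distinct angles. Computing the cosines and $R=65/8$, then clearing the common denominator, yields an integer quadruple (one checks directly that the expected values are $(u,v,w,R)=(39,33,25,65)$), and substituting into (\ref{maineq}) verifies it is a solution. The family $(39k,33k,25k,65k)$, $k\in \mathbb N$, then furnishes infinitely many integer solutions with $u,v,w$ pairwise distinct.

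The only genuine step is to exhibit the one rational example, and the only thing that could go wrong is accidentally choosing a Heronian triangle with two equal angles (which would give $u=v$ or similar); the main ``obstacle'' is therefore just selecting a scalene example, which is trivial. The remainder is a direct verification.
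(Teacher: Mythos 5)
Your proposal covers only part (iii) of the theorem. Part (i) is invoked rather than proved: the ``standard identification'' $u=R\cos A$, $v=R\cos B$, $w=R\cos C$ and the assertion that this quadruple ``automatically satisfies'' (\ref{maineq}) is precisely the content of (i) in the acute case (it amounts to the identity $\cos^2A+\cos^2B+\cos^2C+2\cos A\cos B\cos C=1$, which the paper derives from $\cos A+\cos B+\cos C=1+4\sin\frac{A}{2}\sin\frac{B}{2}\sin\frac{C}{2}$), and you say nothing about the obtuse and right-angle cases (\ref{maineq2}), (\ref{maineq3}), where the sign of one cosine changes. More seriously, part (ii) --- the substantive claim that for arbitrary positive $u,v,w$ there exists exactly one acute and exactly one obtuse triangle with these circumcenter-to-side distances --- is not addressed at all. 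The paper's proof of (ii) analyzes the cubic $f(t)=t^3-(u^2+v^2+w^2)t-2uvw$, shows it has a unique positive root $R$ with $R>\max(u,v,w)$, defines $a=2\sqrt{R^2-u^2}$, $b=2\sqrt{R^2-v^2}$, $c=2\sqrt{R^2-w^2}$, and then verifies the triangle inequality, acuteness, and that the circumradius of the constructed triangle is indeed this $R$ (so its distances to the sides are $u,v,w$); none of that is implied by your argument, so as a proof of the full statement there is a genuine gap.

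For part (iii) itself your argument is correct and takes a different route from the paper. You exhibit one solution from the acute Heronian triangle $(13,14,15)$: with $R=\frac{65}{8}$ and cosines $\frac{3}{5},\frac{33}{65},\frac{5}{13}$ one gets $(u,v,w,R)=(39,33,25,65)$, and indeed $65^3-(39^2+33^2+25^2)\cdot 65-2\cdot 39\cdot 33\cdot 25=274625-210275-64350=0$; homogeneity of degree three then gives the family $(39k,33k,25k,65k)$. The paper instead sets $R=uv$, reduces (\ref{maineq}) to $(u^2-1)(v^2-1)=(w+1)^2$, fixes $u=2$, and invokes the Pell equation $v^2-3k^2=1$, which produces infinitely many pairwise non-proportional solutions rather than rescalings of a single one. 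Your scaling family does satisfy the statement as literally written (infinitely many quadruples in $\mathbb N^4$ with $u,v,w$ distinct), and the direct numerical check makes this part self-contained even without (i); the shortcoming of the proposal is that it proves only this one part of the three-part theorem.
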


\vspace{0.1in}

\n \begin{proof} (i) Denote by $D$, $E$ and $F$ the projections of
$O$ on $\overline{AC}$, $\overline{AB}$, and $\overline{BC}$
respectively (see Figure~\ref{figure6} (a)). One can easily prove
 the identity

\begin{equation} \label{identity}
\cos A+\cos B+\cos C=1+4\sin \frac{A}{2} \sin \frac{B}{2} \sin
\frac{C}{2},
\end{equation}

\n where $A$, $B$ an $C$ are the angles of the triangle. In the
triangle $\triangle OBC$,   $\overline {OF}$ is clearly the angle
bisector of $\angle BOC$. First, we assume that the triangle $ABC$
is acute. Because $A$ is less than $90^{\circ}$, $m(\angle
BOF)=\frac{m (arc\ \overset{\frown} {BC})}{2}=A$. Hence $\cos
A=\frac{u}{R}$,  and similarly $\cos B=\frac{v}{R}$, and  $\cos
C=\frac{w}{R}$. Substituting into (\ref{identity}) we get

\begin{equation}\label{secondeq}
u+v+w=R+\sqrt{\frac{2(R-u)(R-v)(R-w)}{R}},
\end{equation}

\n which after elimination of the radical sign gives the equation
(\ref{maineq}).

(ii) First we want to show the existence and uniqueness of an
acute triangle with the required property. Let us denote  the
quantity $\left(\frac{u^2+v^2+w^2}{3}\right)^{1/2}$ by $\omega$
and observe that the AM-GM Inequality gives
$$\omega^3 \ge uvw.$$

\n If we consider the cubic polynomial function
$$f(t)=t^3-(u^2+v^2+w^2)t-2uvw,\ t\in \mathbb R$$
\n observe that $f'$ has as critical points $\pm \omega$. There
are clearly at most three real solutions of $f(t)=0$. Since
$f(0)=-2uvw<0$ and $f(-\omega)=2(\omega^3-uvw) \ge 0$, $f$ must
have two real zeros in $(-\infty, 0)$ (or possibly one with
multiplicity two) and a unique positive zero that we will simply
denote by $R$. Because $f(u)=-u(v+w)^2<0$ , $f(v)=-v(u+w)^2<0$,
$f(w)=-w(u+v)^2<0$ and $f(2\omega)=2(\omega^3-uvw)\ge 0$ we see
that
$$R\in (\max \{x,y,z\},2\omega].$$

\begin{figure}
\begin{center}
$\underset{(a)\ \text{Distances to sides from the circumcenter}
}{\epsfig{file=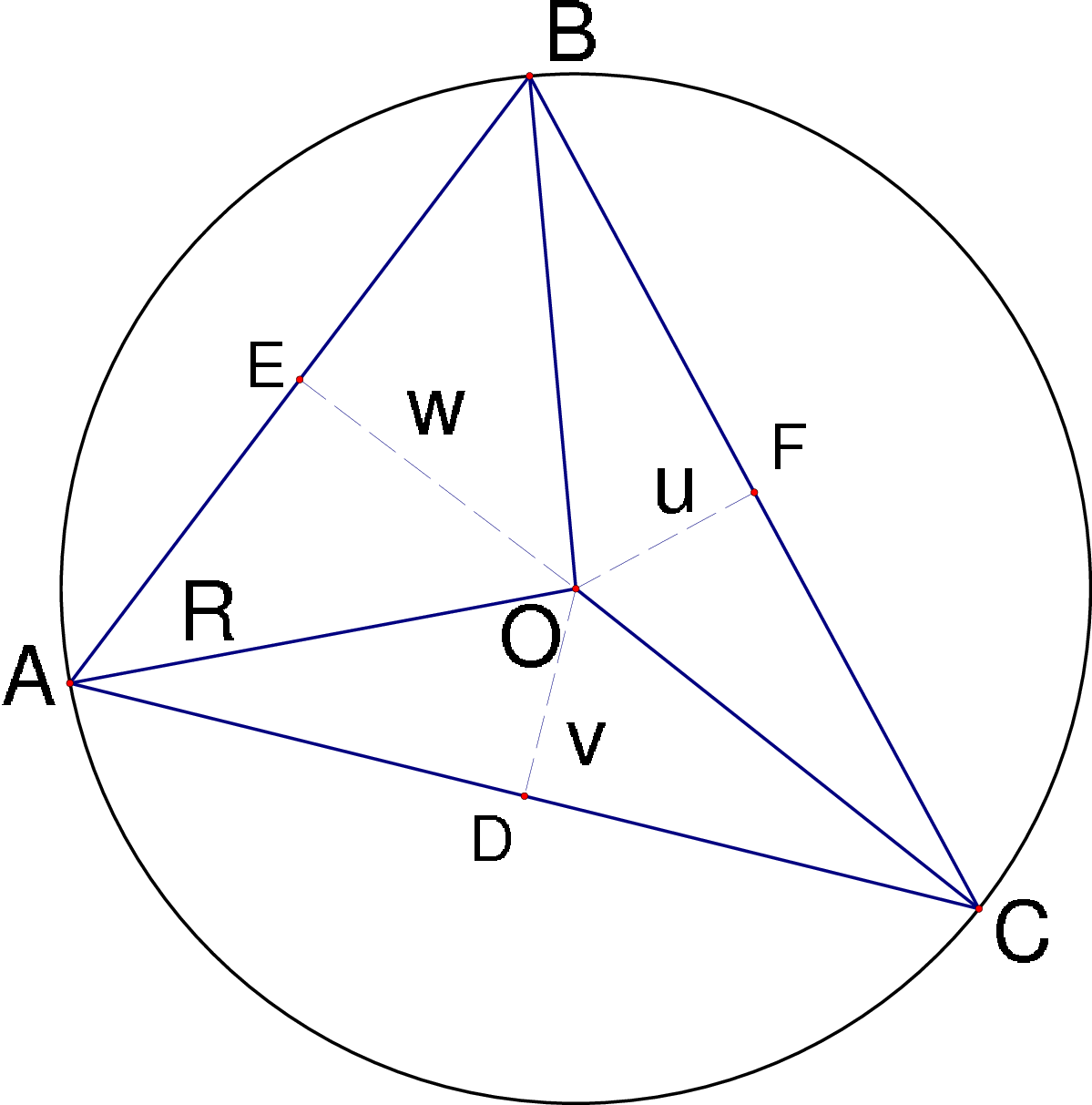,height=2.2in,width=2.2in}}$\ \ \ $\underset{(b)\
\text{Orthocenter's distances to vertices}
}{\epsfig{file=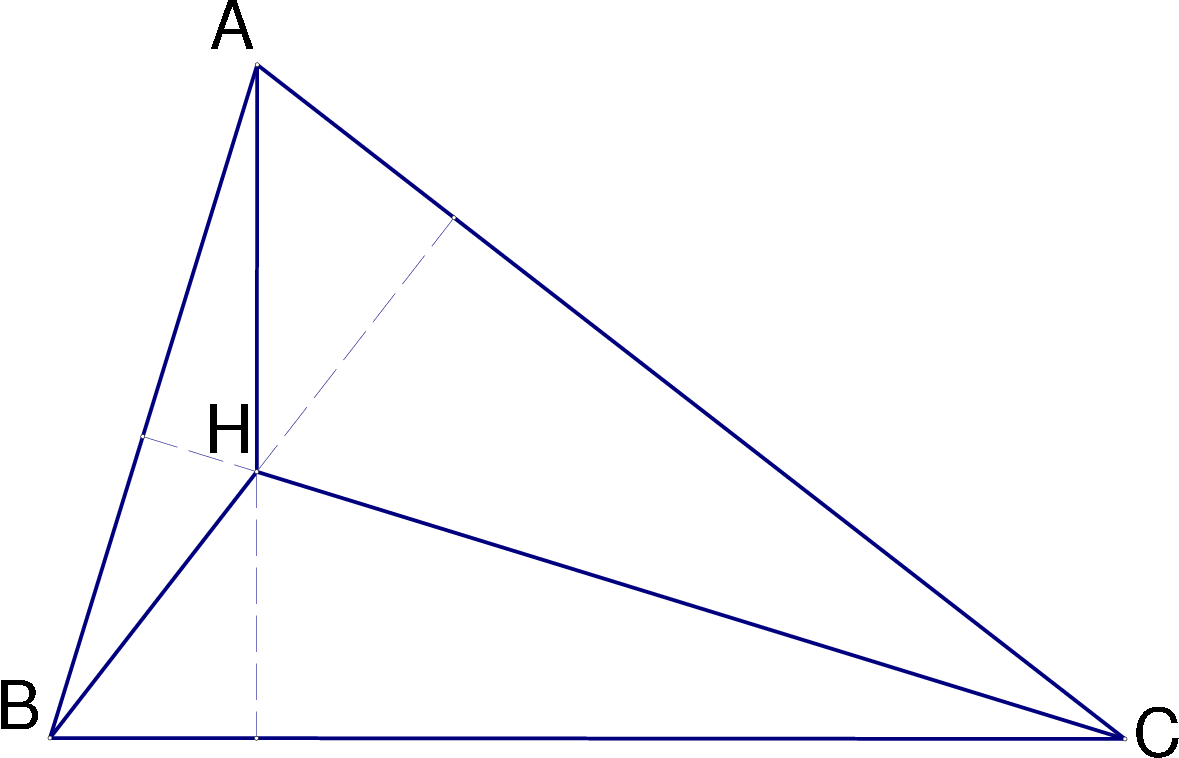,height=1.9in,width=2.7in}}$
\end{center}
\caption{Identical problems} \label{figure6}
\end{figure}

The radius $R$ determines the sides $a$, $b$ and $c$ by the
formulas $a=2R\sin A=2\sqrt{R^2-u^2}$, $b=2\sqrt{R^2-v^2}$ and
$c=2\sqrt{R^2-w^2}$. Without loss of generality we may assume that
$w\le v\le u$. In order to have a triangle with side lengths $a$,
$b$ and $c$ it is necessary and sufficient to  have

$$\sqrt{R^2-u^2}+\sqrt{R^2-v^2}>\sqrt{R^2-w^2}$$

or

$$2\sqrt{(R^2-u^2)(R^2-v^2)}>u^2+v^2-w^2-R^2.$$

This is trivially verified if we show that
$R>(u^2+v^2-w^2)^{1/2}$. Since
$f((u^2+v^2-w^2)^{1/2})=-2w^2(u^2+v^2-w^2)^{1/2}-2uvw<0$ then we
must have $R>(u^2+v^2-w^2)^{1/2}$. Once we have the triangle
constructed with side lengths $a$, $b$ and $c$, we must check to
see if the triangle is acute, i.e. $a^2+b^2>c^2$, $b^2+c^2>a^2$
and $a^2+c^2>b^2$. These inequalities are equivalent to
$R>(u^2+v^2-w^2)^{1/2}$, $R>(u^2-v^2+w^2)^{1/2}$ and
$R>(-u^2+v^2+w^2)^{1/2}$ respectively, which were shown to be true
earlier. We denote the angles of the triangle with sides $a$, $b$,
and $c$ by $A'$, $B'$ and $C'$. If we calculate the cosine
function for $A'$ we get

$$\cos
A'=\frac{b^2+c^2-a^2}{2bc}=\frac{R^2+u^2-v^2-w^2}{2\sqrt{(R^2-v^2)(R^2-w^2)}}.$$

Using (\ref{maineq}), which $R$ satisfies, one can show that $\cos
A'=\frac{u}{R}$. So, $\sin A'=\frac{a}{2R}$ which implies that $R$
is the radius of the circle circumscribed about the constructed
triangle. Then the distances to the sides from the center of the
circumscribed circle  must be $u$, $v$ and $w$. Therefore, we have
only one triangle that satisfies the required conditions.

For the second part of the claim in (ii) one needs to repeat the
above arguments with the appropriate changes. In this case the
radius $R$ must satisfy the equation

\begin{equation}\label{maineq2}
R^3-(u^2+v^2+w^2)R+2uvw=0.
\end{equation}

(iii) One such solution is $u=2$, $v=7$, $w=11$ and $R=14$. This
example suggests that one can take $R=uv$ and hope to obtain more
solutions of this type. In this case, (\ref{maineq}) reduces to

$$(u^2-1)(v^2-1)=(w+1)^2.$$

This equation is satisfied if $v^2-1=k^2(u^2-1)$ for some $k\in
\mathbb N$, and $zw=k(u^2-1)-1$. If we fix $u=2$ for instance, we
get Pell's diophantine equation, $v^2-3k^2=1$, which  is known to
have infinitely many integer solutions. The values $u$, $v$ and
$w$ are clearly distinct if $k>1$.

There are many different patterns of solutions. Some examples are
included in the table below:

\vspace{0.1in}
\begin{center}
\begin{tabular}{|c||c|c|c|c|c|c|c|c|c|c|c|c|}
  \hline
  % after \\: \hline or \cline{col1-col2} \cline{col3-col4} ...
  u& 1 & 2 & 2 & 3 & 4 & 4 & 6 & 7 & 8 & 11 & 11 & 12 \\   \hline
  v & 13 & 7 & 9 & 14 & 14 & 18 & 11 & 19 & 17 & 17 & 19 & 22 \\   \hline
  w & 22 & 11 & 12 & 25 & 22 & 24 & 14 & 25 & 22 & 21 & 26 & 28 \\  \hline
  R & 26 & 14 & 16 & 30 & 28 & 32 & 21 & 35 & 32 & 33 & 38 & 42 \\
  \hline
\end{tabular}
\end{center}
\end{proof}
Let us observe that this discussion of this subsection also solves
the problem for $\alpha=HA$, $\beta=HB$ and $\gamma=HC$, where $H$
is the orthocenter of a triangle, i.e. the intersection of its
altitudes (see Figure~\ref{figure6}(b)). Indeed, one can show
that there are very similar formulas for these distances in terms
of the sides and angles of the triangle: $HA=2R\cos A$, $HB=2R\cos
B$ and $HC=2R\cos C$. Similarly, the problem $\alpha=HD$, $\beta=HE$ and $\gamma=HF$
(Figure~\ref{Figure7}(a)) leads to the same analysis since $HD=2R\cos A\cos C$.

\begin{figure}
\begin{center}
$\underset{(a)\ \text{Distances to sides from the circumcenter}
}{\epsfig{file=fig1v2.eps,height=2.2in,width=2.2in}}$$\underset{(b)\
\text{The incircle}
}{\epsfig{file=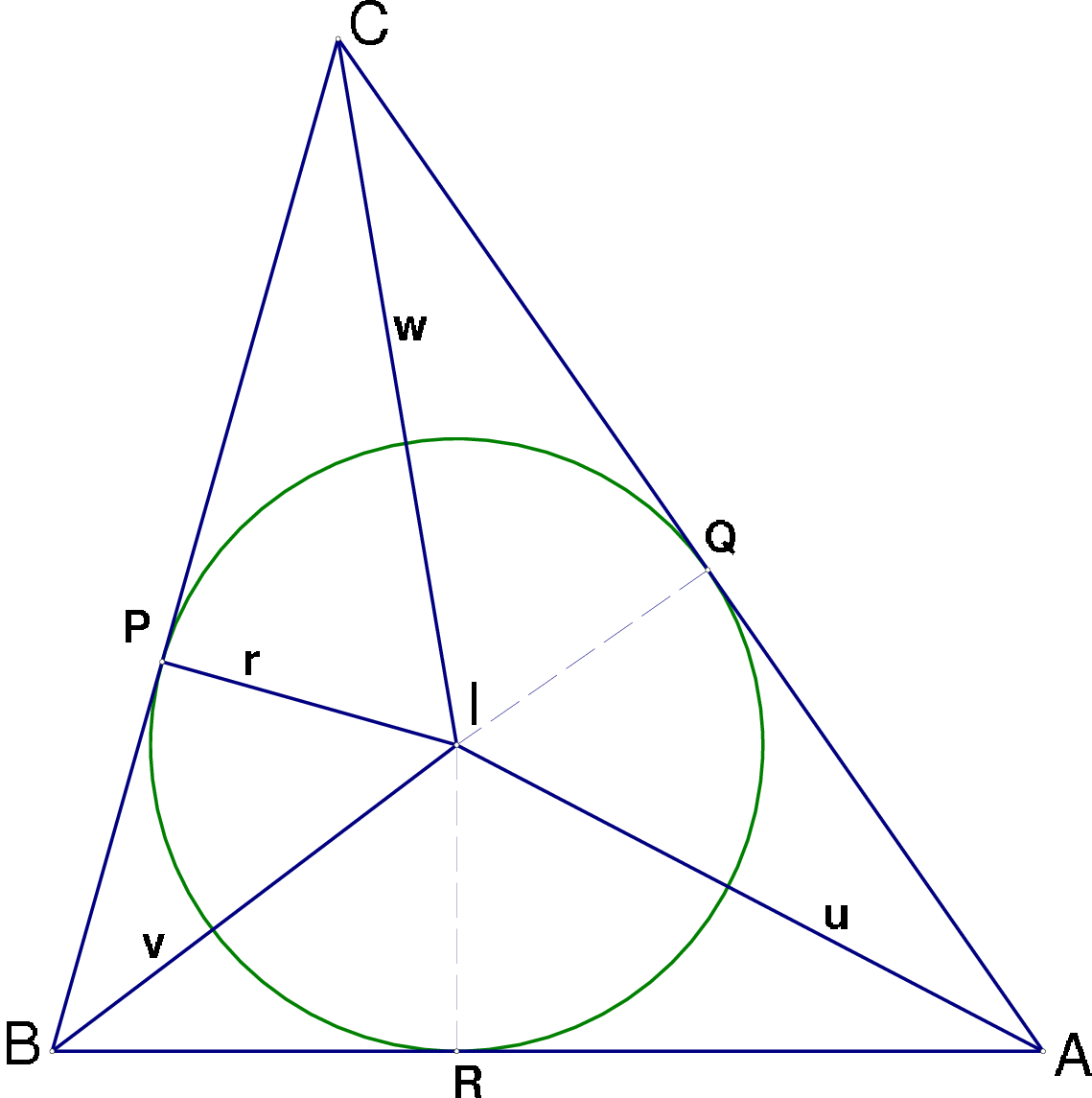,height=2.3in,width=2.3in}}$

\end{center}
\caption{ Special Cases} \label{figure6}
\end{figure}

\subsection{Distances from the center of the incircle to the
vertices}

First we will show a relation between the radius of the incircle
and the distances from the center of the incircle to the vertices.
If $I$ is the center of the incircle of the triangle $ABC$, $r$ is the
radius of the incircle and we denote $AI$, $BI$ and $CI$ by $u$, $v$ and
$w$ respectively as in Figure~\ref{figure6}(b), then
\[
\sin \frac{A}{2} = \frac{r}{u}, \quad \sin \frac{B}{2} =
\frac{r}{v}, \quad \textup{and} \quad \sin \frac{C}{2} =
\frac{r}{w}.
\]

From here we infer that
\[
\cos \frac{B}{2} = \sqrt{1 - \frac{r^2}{v^2}} \quad \textup{and}
\quad \cos \frac{C}{2} = \sqrt{1 - \frac{r^2}{w^2}}.
\]
On the other hand, since
\[
\sin \frac{A}{2} = \sin \frac{\pi - (B + C)}{2} = \cos \frac{B +
C}{2} = \cos \frac{B}{2} \cos \frac{C}{2} - \sin \frac{B}{2} \sin
\frac{C}{2}
\]
we get the third degree equation in $r$ (with $u$, $v$ and $w$ as
parameters)
\[
\frac{r}{u} = \sqrt{1 - \frac{r^2}{v^2}}\sqrt{1 - \frac{r^2}{w^2}}
- \frac{r^2}{vw} \iff \frac{r}{u} + \frac{r^2}{vw} = \sqrt{1 -
\frac{r^2}{v^2}}\sqrt{1 - \frac{r^2}{w^2}}
\]
\[
\iff \frac{r^2}{u^2} + \frac{r^4}{v^2 w^2} + 2 \frac{r^3}{uvw} = 1
- \frac{r^2}{v^2} - \frac{r^2}{w^2} + \frac{r^4}{v^2 w^2} \iff
\]

\begin{equation}\label{cubicinr}
\frac{2}{uvw} r^3 + \left(\frac{1}{u^2} + \frac{1}{v^2} +
\frac{1}{w^2} \right) r^2 - 1 = 0.
\end{equation}

\n It is easy to see that equation (\ref{cubicinr}) has a unique
positive solution which is  less than either of the values $u$,
$v$, or $w$.

Once we have $r$, a simple geometrical construction shows that
$a$, $b$ and $c$ are uniquely determined by $u$, $v$ and $w$. We
want to show some relations between the sides of the triangle, the
radius of the incircle, and the distances from the center of the
incircle to the vertices that will make this clear. Let $P, Q, R$
be the points of intersection of the perpendiculars from $I$ on
$BC$, $CA$ and $AB$ respectively. It is well known that
\[
PC = QC = \frac{a + b - c}{2}; \ QA = RA = \frac{b + c - a}{2}, \
\text{and} \ RB = PB = \frac{a + c - b}{2}.
\]
Then
\[
a + b - c = 2 \sqrt{w^2 - r^2}, \ b + c - a = 2 \sqrt{u^2 - r^2},
\ \textup{and} \ a + c - b = 2 \sqrt{v^2 - r^2},
\]
which leads to
\begin{equation}\label{abcintermsofuvwandr}
\begin{cases} a = \sqrt{v^2 - r^2} + \sqrt{w^2 - r^2}, \\ \\  b = \sqrt{u^2
- r^2} + \sqrt{w^2 - r^2}, \ \text{and}\  \\ \\ c = \sqrt{u^2 -
r^2} + \sqrt{v^2 - r^2}.
\end{cases}
\end{equation}

Now we will work our way backwards.

\begin{theorem}\label{distancesfromI}
If $u$, $v$, and $w$ are positive quantities then there is a
unique triangle such that the distances from the vertices to the
center of the incircle are equal to $u$, $v$, and $w$
respectively.
\end{theorem}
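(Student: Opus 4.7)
The plan is to leverage the cubic equation $(\ref{cubicinr})$ that the authors have already derived in the discussion preceding the theorem, and run the argument backward. First I would establish that for any triple $u,v,w>0$, the cubic
\[
\frac{2}{uvw}\, r^{3} \;+\; \left(\frac{1}{u^{2}}+\frac{1}{v^{2}}+\frac{1}{w^{2}}\right) r^{2} \;-\; 1 \;=\; 0
\]
has a unique positive root $r^{*}$, and moreover $r^{*}<\min(u,v,w)$. The existence and uniqueness are immediate: the left-hand side is strictly increasing on $(0,\infty)$, equals $-1$ at $r=0$, and tends to $+\infty$. Assuming without loss of generality $u=\min(u,v,w)$, substituting $r=u$ into the left-hand side yields $\bigl(\frac{u}{v}+\frac{u}{w}\bigr)^{2}>0$, so the root must lie in $(0,u)$.

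Next I would recover the triangle from its angles. Set $A=2\arcsin(r^{*}/u)$, $B=2\arcsin(r^{*}/v)$, $C=2\arcsin(r^{*}/w)$; each lies in $(0,\pi)$ because the arguments of $\arcsin$ are in $(0,1)$ by the previous step. The decisive check is that $A+B+C=\pi$, and for this I would simply reverse the manipulation the authors used to reach $(\ref{cubicinr})$. Dividing the cubic by $r^{2}$ and rearranging gives, equivalently,
\[
\sin(A/2) \;=\; \cos(B/2)\cos(C/2)-\sin(B/2)\sin(C/2) \;=\; \cos\!\bigl((B+C)/2\bigr).
\]
Since $A/2\in(0,\pi/2)$ and $(B+C)/2\in(0,\pi)$, the only consistent solution of this equation is $A/2=\pi/2-(B+C)/2$, i.e.\ $A+B+C=\pi$. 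There is therefore a triangle with angles $A,B,C$, unique up to similarity; fix its size by requiring the inradius to equal $r^{*}$. In such a triangle the standard identity $AI=r/\sin(A/2)$ then gives $AI=u$, $BI=v$, $CI=w$ at once. (One can cross-check by defining $a,b,c$ via formulas $(\ref{abcintermsofuvwandr})$ and verifying the triangle inequalities $a+b-c=2\sqrt{w^{2}-r^{*2}}>0$ hold trivially.)

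For uniqueness I would run the same logic in reverse. If $T$ is any triangle whose incenter is at distances $u,v,w$ from its vertices and if $r'$ denotes its inradius, then the authors' derivation of $(\ref{cubicinr})$ shows that $r'$ satisfies exactly the cubic above; by the uniqueness of the positive root, $r'=r^{*}$. The angles of $T$ are then forced to be $2\arcsin(r^{*}/u),\, 2\arcsin(r^{*}/v),\, 2\arcsin(r^{*}/w)$, and the inradius fixes the scale, so $T$ is congruent to the triangle constructed above.

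The only real obstacle is bookkeeping around reversibility: the authors' passage from $\sin(A/2)=\cos((B+C)/2)$ to the polynomial involved squaring, so one must ensure that restricting $r$ to $(0,\min(u,v,w))$ eliminates spurious roots and that the branch $A/2=\pi/2+(B+C)/2$ is genuinely ruled out by $A/2<\pi/2$. Both are handled by the explicit sign check $P(\min(u,v,w))=\bigl(\frac{\min}{v}+\frac{\min}{w}\bigr)^{2}>0$ and by the location of the arcsine values, so no delicate analysis is needed beyond monotonicity of the cubic.
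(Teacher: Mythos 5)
Your proof is correct, but the existence half takes a genuinely different route from the paper's. The paper builds the triangle through its side lengths: with $r$ the unique positive root of (\ref{cubicinr}) it defines $a$, $b$, $c$ by (\ref{abcintermsofuvwandr}), computes the inradius $r'$ of that triangle via Heron's formula, and then proves the algebraic identity $r'=r$ by substituting $m=u/r$, $n=v/r$, $p=w/r$ and carefully eliminating square roots (the passage from (\ref{eq2}) to (\ref{eq1})); only then do the right triangles at the tangency points give $AI=u$, $BI=v$, $CI=w$. You instead build the triangle through its angles, $A=2\arcsin(r^{*}/u)$ and so on, and the whole burden shifts to the angle-sum check $A+B+C=\pi$, which you obtain by un-squaring the derivation of (\ref{cubicinr}): since $r^{*}<\min(u,v,w)$ (your sign check at $r=\min(u,v,w)$ is right), both sides of the pre-squared relation $\frac{r}{u}+\frac{r^{2}}{vw}=\sqrt{1-\frac{r^{2}}{v^{2}}}\sqrt{1-\frac{r^{2}}{w^{2}}}$ are nonnegative, so the cubic genuinely yields $\sin(A/2)=\cos\bigl((B+C)/2\bigr)$, and the branch analysis with $A/2<\pi/2$ forces the angle sum; scaling to inradius $r^{*}$ and the identity $AI=r/\sin(A/2)$ then finish existence. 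Your route avoids Heron's formula and the paper's heaviest computation, at the cost of the sign/branch bookkeeping you explicitly flag and correctly dispose of; the paper's route keeps everything algebraic and produces the explicit side formulas along the way (though these were already derived before the theorem). The uniqueness halves are essentially identical: in both, the forward derivation forces the inradius of any admissible triangle to satisfy (\ref{cubicinr}), whose unique positive root then pins down the angles (or the sides) and hence the triangle up to congruence.
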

\begin{proof} The part about uniqueness follows from the analysis above the statement of the theorem.
For existence, we let $r$ be the unique positive solution of
(\ref{cubicinr}) and $a$, $b$ and $c$ as given by
(\ref{abcintermsofuvwandr}). Then, using Heron's formula, the area
of the triangle is given by
\[
A = \sqrt{\sqrt{u^2 - r^2} \sqrt{v^2 - r^2} \sqrt{w^2 - r^2} (
\sqrt{u^2 - r^2} + \sqrt{v^2 - r^2} + \sqrt{w^2 - r^2})}
\]
and hence the radius of the circle inscribed in the triangle with
sides $a$, $b$ and $c$ is
\[
r' =\frac{2A}{a+b+c}= \sqrt{\frac{\sqrt{u^2 - r^2} \sqrt{v^2 -
r^2} \sqrt{w^2 - r^2}}{\sqrt{u^2 - r^2} + \sqrt{v^2 - r^2} +
\sqrt{w^2 - r^2}}}.
\]

\n With the analysis we did earlier, we see that if $r'=r$, the
Pythagorean theorem and formulas (\ref{abcintermsofuvwandr}) will
give  $AI=u$, $BI=v$ and $CI=w$. So, to complete the proof we need
to show that $r = r'$. In other words, we must show is that
(\ref{cubicinr}) implies
\[
r = \sqrt{\frac{\sqrt{u^2 - r^2} \sqrt{v^2 - r^2} \sqrt{w^2 -
r^2}}{\sqrt{u^2 - r^2} + \sqrt{v^2 - r^2} + \sqrt{w^2 - r^2}}}.
\]

\n Because $r$ is less than each of $u$, $v$ and $w$, the
substitutions
\[
m = \frac{u}{r}>1, \quad n = \frac{v}{r}>1, \quad  \text{and}\ \ p
= \frac{w}{r}>1,
\]
make the last equality equivalent to

\begin{equation}\label{eq1}
\sqrt{m^2 - 1} + \sqrt{n^2 - 1} + \sqrt{p^2 - 1} = \sqrt{m^2 - 1}
\sqrt{n^2 - 1} \sqrt{p^2 - 1}.
\end{equation}
We note that, with these substitutions, (\ref{cubicinr}) becomes

\begin{equation}\label{eq2}
2mnp + m^2 n^2 + n^2 p^2 + m^2 p^2 - m^2 n^2 p^2 = 0.
\end{equation}

\n Eliminating the square roots in a careful way, (\ref{eq1})
becomes

\[
\sqrt{m^2 - 1} + \sqrt{n^2 - 1}  = (\sqrt{m^2 - 1} \sqrt{n^2 -
1}-1)\sqrt{p^2 - 1}
\]
\[
\iff m^2 - 1 + n^2 - 1 + 2 \sqrt{m^2 - 1} \sqrt{n^2 - 1} = (m^2 -
1) (n^2 - 1) (p^2 - 1) + p^2 - 1
\]
\[
- 2 (p^2 - 1) \sqrt{m^2 - 1} \sqrt{n^2 - 1}.
\]

\n Using (\ref{eq2}) this last equality simplifies to $p \sqrt{m^2
- 1} \sqrt{n^2 - 1} = m n + p$ which, after getting rid of the
square roots, becomes (\ref{eq2}). \end{proof}

\begin{corollary}\label{ascutitunghic} Given $u$, $v$ and $w$
three positive real numbers, the triangle ensured by
Theorem~\ref{distancesfromI} is acute if and only if

\begin{equation}\label{ascutitunghicI}
\begin{cases}
\sqrt{2}u^2vw + u^2(v^2 + w^2)- v^2w^2 > 0,\\ \\
\sqrt{2}u v^2 w + v^2(u^2 + w^2)- u^2w^2 > 0, \ and  \\ \\
\sqrt{2} uv w^2 + w^2(u^2 + v^2)- u^2v^2 > 0.
\end{cases}
\end{equation}
In the context of the broken stick problem, if $u=\alpha$,
$v=\beta$ and $w=\gamma$, the probability that the triangle given
by Theorem~\ref{distancesfromI} is acute is approximately
$0.1962$.
\end{corollary}

\proof  The triangle is acute if and only if
\[
\sin \frac{A}{2} < \frac{1}{\sqrt{2}}, \ \sin \frac{B}{2} <
\frac{1}{\sqrt{2}}, \quad \textup{and} \quad \sin \frac{C}{2} <
\frac{1}{\sqrt{2}}.
\]
This means that
\[
\frac{r}{u} < \frac{1}{\sqrt{2}}, \ \frac{r}{v} <
\frac{1}{\sqrt{2}},\quad \textup{and} \quad \frac{r}{w} <
\frac{1}{\sqrt{2}} \iff r < \min \left\{ \frac{u}{\sqrt{2}},
\frac{v}{\sqrt{2}}, \frac{w}{\sqrt{2}} \right\}.
\]

Since $r$ is the unique positive solution of (\ref{cubicinr}) and
the derivative of the function
$g(t)=\frac{2t^3}{uvw}+(1/u^2+1/v^2+1/w^2)t^2-1$ is positive for
$t>0$,  this is equivalent to $g(\frac{u}{\sqrt{2}})>0$,
$g(\frac{v}{\sqrt{2}})>0$, and $g(\frac{w}{\sqrt{2}})>0$. This
translates into (\ref{ascutitunghicI}). The equations that define
the probability are of degree four and we could only find the
probability experimentally. \eproof

The ratio $P(obtuse)/P(acute) \approx 4.1$ balances out the previous cases.

\section{Further problems and a summary}\label{challengeproblems}
One may investigate using this technique the case in which $\alpha$, $\beta$ and $\gamma$ are the symmedians
of a triangle. The formula for the symmedian corresponding to vertex $A$ is given by $sm_a=\frac{bc}{b^2+c^2}\sqrt{2(b^2+c^2)-a^2}$.
This formula is very similar to the angle bisector formula but the situation seems
to be very different of the one discussed in Section~\ref{AngleBisectors}. We have no answer to this problem.

\n There are certainly interesting generalizations that
can be considered and in some directions they have already appeared in the literature. For instance, Carlos D'Andrea and Emiliano Gomez
(\cite{dg}) showed that if $n-1$ ($n\ge 3$) breaking points are
considered, the probability of having an $n$-gon with the resulting
segments is equal to $\ds 1-n/2^{n-1}$. This result also
appeared in \cite{bb}, where the solution is derived by solving
another geometric probability question, called by the authors, {\em
The Semicircle Problem} (\cite{mgb}). In fact, it was shown to be
 equivalent to this problem: {\it ``If $n + 1$ points are randomly
selected on the circumference of a circle, what is the probability
that they will all fall within some semicircle?"} Another direction
of further investigations along these lines is to go into space, so to speak, and ask: ``{\it If the stick breaks into six segments, what is the probability
that the segments are the sides of a tetrahedron?}"

Let us briefly discuss the following generalization which appeared in \cite{vv} as a proposed problem. It is worth mentioning that the solution to this problem was
from its author, Professor Gheorghe Mihoc, and it was based on a
different idea than the one we have included below.

\begin{proposition}\label{gheMihoc} Given an arbitrary triangle
with sides $a$, $b$ and $c$, the probability that the distances
from a point inside the triangle to the sides of the triangle form
a triangle, is equal to
$$\frac{2abc}{(a+b)(b+c)(c+a)}.$$
\end{proposition}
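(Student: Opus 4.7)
The plan is to compute the probability of the complement event: the distances $d_a, d_b, d_c$ from the interior point $P$ to the sides of length $a, b, c$ do NOT form a triangle, i.e. one of them is at least the sum of the other two. I would split this ``bad'' region inside $\triangle ABC$ into three subsets $R_a, R_b, R_c$ according to which distance is oversized. First I would note these are pairwise disjoint up to measure zero: if both $d_a \ge d_b + d_c$ and $d_b \ge d_a + d_c$ held, adding them would force $d_c \le 0$.

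Since each of $d_a, d_b, d_c$ is an affine function of the position of $P$, the boundary $d_a = d_b + d_c$ of $R_a$ is a straight line, and I would locate it by intersecting with the two sides adjacent to $A$. On side $AB$ we have $d_c = 0$, and combining $d_a = d_b$ with $a d_a + b d_b = 2S$ (the usual identity, $S$ being the area of $ABC$) pins down a unique point $P_c \in AB$; a short computation identifies $P_c$ as the foot of the internal angle bisector from $C$, so $AP_c = \frac{bc}{a+b}$. By the symmetric argument, the boundary meets $AC$ at the point $P_b$ with $AP_b = \frac{bc}{a+c}$. Vertex $A$ lies on the correct side of the boundary (there $d_a = h_a > 0 = d_b + d_c$), so $R_a$ is the little triangle $AP_cP_b$.

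The usual angle-$A$ area formula then gives
$$\frac{\text{Area}(R_a)}{S} \;=\; \frac{AP_c \cdot AP_b}{c \cdot b} \;=\; \frac{bc}{(a+b)(a+c)},$$
and analogously $\frac{\text{Area}(R_b)}{S} = \frac{ca}{(a+b)(b+c)}$, $\frac{\text{Area}(R_c)}{S} = \frac{ab}{(a+c)(b+c)}$. Summing these three fractions over the common denominator $(a+b)(b+c)(c+a)$ produces the numerator $a^2(b+c) + b^2(a+c) + c^2(a+b)$, and I would finish with the algebraic identity
$$(a+b)(b+c)(c+a) \;=\; a^2(b+c) + b^2(a+c) + c^2(a+b) + 2abc.$$
This makes the failure probability equal to $1 - \frac{2abc}{(a+b)(b+c)(c+a)}$, giving the stated answer.

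The main (really only) obstacle is the geometric identification of $R_a$ as a sub-triangle at vertex $A$ with explicit vertices $P_b, P_c$ on the adjacent sides; once the line $d_a = d_b + d_c$ is placed, the rest is an area ratio and the algebraic identity above. Note also that the proposition is independent of the equilateral-triangle model of Section~\ref{firstsection}: here the ambient probability is simply uniform area on $ABC$, which is why the sum $ad_a + bd_b + cd_c$, rather than $d_a + d_b + d_c$, is the conserved quantity.
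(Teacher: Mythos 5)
Your argument is correct and is essentially the paper's own proof: the complement of the event consists of the three corner triangles cut off by the lines through the feet of the internal angle bisectors, their area ratios such as $\frac{bc}{(a+b)(a+c)}$ follow from the Angle Bisector Theorem, and the identity $(a+b)(b+c)(c+a)=a^2(b+c)+b^2(c+a)+c^2(a+b)+2abc$ finishes the computation. The only difference is that you spell out why the boundary $d_a=d_b+d_c$ is the line through those bisector feet (a step the paper merely asserts), which is a welcome bit of extra detail rather than a new method.
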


\begin{figure}[h]
$$\underset{ Figure\ 9:\  \text{$DEF$ triangle}
}{\epsfig{file=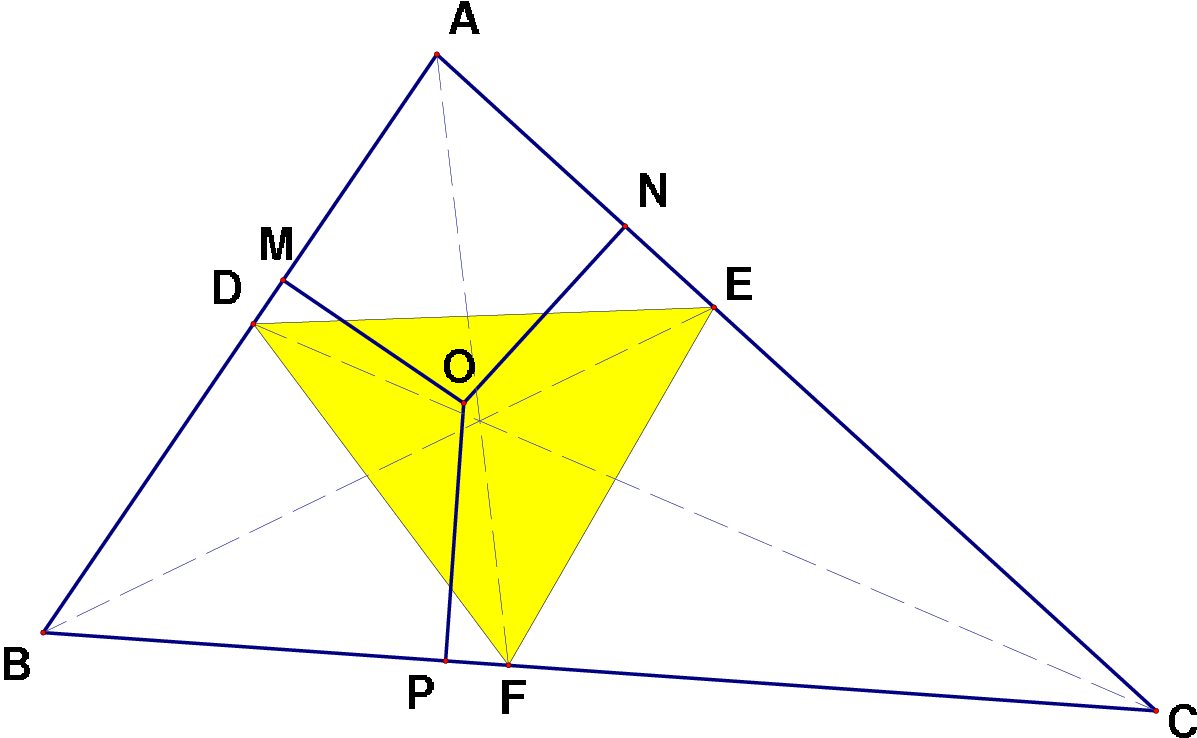,height=1.6in,width=2.2in}}$$\
\label{Figurelast}
\end{figure}

For the sake of completeness we sketch a proof of
this proposition. We  refer to Figure~9. Triangle $DEF$ is given by the points where the angle
bisectors intersect the sides of triangle $ABC$. First, one
shows that the region determined by the interior of triangle $DEF$ is the region that gives the desired probability. Using the
Angle Bisector Theorem one can show that

$$\frac{Area(\triangle BDF)}{Area(\triangle ABC)}=\frac{ac}{(b+c)(a+b)},$$

\n and  also the other two equalities obtained by cyclic
permutation of the sides $a$, $b$ and $c$. The formula given, now
follows from an algebraic identity

$$(a+b)(b+c)(c+a)-[ (a+b)ab+(b+c)bc+(c+a)ca]=2abc.$$

\n We observe that this probability has its greatest value of
$1/4$ when $a=b=c$. This means, the probability is at a maximum
when $ABC$ is equilateral.

Another question one may ask is: ``how does the answer in
Theorem~\ref{sidesacute} change if $\alpha$, $\beta$ and $\gamma$
are computed relative to an arbitrary triangle as in
Proposition~\ref{gheMihoc}?" A general answer is probably quite complicated because the curve
$\alpha^2=\beta^2+\gamma^2$ may be an arc of an ellipse or an arc of a hyperbola. For example, if
$a=b=\frac{15}{4}$ and $c=6$, one of the conics at the boundary of
the region defining the probability is an ellipse and the other
two are hyperbolas. The probability in Theorem~\ref{sidesacute}
becomes

$$P=\frac{25}{28}+\frac{25}{32}\ln
\frac{13}{5}-\frac{100}{49} \sqrt{14}
\arcsin\left(\frac{\sqrt{7}}{13}\right).$$

\vspace{0.2in}
Finally, let us summarize  our results:

\vspace{0.2in}
\centerline{
\begin{tabular}{||c|c|c|c||}
 \hline \hline
{\bf Case}  & {\bf Probability}  & {\bf Acute}  & {\bf Ratio $\frac{Obtuse}{Acute}$}\\
  \hline \hline
  % after \\: \hline or \cline{col1-col2} \cline{col3-col4} ...
  classical case & $\frac{1}{4} $& $\ln(\frac{8}{e^2})$ & 2.146968 \\
  \hline
  medians &  $\frac{1}{4} $ & $\frac{1}{3}-\frac{5}{9}\ln\left(\frac{8}{5}\right)$  &  2.461635121 \\
  \hline
  altitudes & $\frac{4}{25}\left(3\sqrt{5}\ln \frac{3+\sqrt{5}}{2}-5\right)$&  0.07744388 & 2.008 \\
  \hline
r,s,t  & $\frac{5}{27}$ & 0.047 &  2.87  \\
\hline
angle bisector & 1 & 0.1195 & 7.36\\
\hline
$IA$, $IB$, $IC$ & 1 &  0.1962 &  4.1 \\
\hline
excircles radii& 1 & $\frac{24\sqrt{7}}{49}\arcsin(\frac{\sqrt{14}}{8})-\frac{2}{7} $ &  1.9  \\
\hline
$h_a$, $w_a$ and $m_a$ & 1 &  0.042234  & 22.7 \\
\hline
\hline
\end{tabular}}
\vspace{0.2in}
\n {\bf Acknowledgements.} We thank Professor
Albert~VanCleave who helped us significantly improve the
exposition of this article. We also want to thank Professors Charles M. Grinstead and J. Laurie Snell for writing the book
\cite{gs} and providing that in pdf format. The first author learned about The Broken Stick Problem from their book (Problem 12, page 73)
while teaching the Introduction to Probability course at CSU in the Spring of 2008.

\n Eugen J. Ionascu \texttt{ Department of Mathematics, Columbus State University, Columbus, GA
31907, US},  e-mail: {math@ejionascu.ro}

\vspace{0.1in}

\n Gabriel Prajitura \texttt{  The College at Brockport, State University of New York, US}
e-mail: {gprajitu@brockport.edu}


\begin{thebibliography}{9}

\bibitem{es} B. Eisenberg and R. Sullivan,   Random triangles in $n$ dimensions, {\em Amer. Math. Monthly}, {\bf 103}(1996), no. 4,  308--318.
\bibitem{dg} C. D'Andrea and E. Gomez,   The broken spaghetti noodle, {\em Amer. Math. Monthly}, {\bf 113} (2006), 555--557.
\bibitem{bb} T. J. Bannon and R. E. Bradley, Mind the gap, {\em College Math. J.}, {\bf 39}(2008),  95--99.
\bibitem{mg} M. Gardner, {\it The Second Scientific American Book of Mathematical Puzzles and Diversions},  University of Chicago Press, Chicago, 1987.
\bibitem{gg} G. S. Goodman,   The problem of the broken stick reconsidered, {\em Math. Intelligencer}, {\bf 30} (2008), 43--49.
\bibitem{guy} R. K. Guy, There are three times as many obtuse-angled triangles as there are acute-angled ones, {\em Math. Mag.}, {\bf 66}(1993), 175--179.
\bibitem{gr} L. A. Graham, {\it Ingenious Mathematical Problems and Methods}, Dover Publications, Inc., New York, 1959.
\bibitem{gs} C. M. Grinstead and J. L. Snell, {\it Introduction to Probability}, American Mathematical Society, 1997.
\bibitem{rh} R. Honsberger, {\it Ingenuity in mathematics}, Mathematical Association of America, Washington, D.C., 1970.
\bibitem{eji1} E. J. Ionascu,   Problem 11443, {\em Amer. Math. Monthly}, {\bf 116} (2009), 548.
\bibitem{eji2} E. J. Ionascu,   Problem 11663, {\em Amer. Math. Monthly}, {\bf 119} (2012), 699.
\bibitem{di} D. Ippolito,  The spaghetti problem problem, {\em Mathematics Teacher}, {\bf 93} (2000), 422--426.
\bibitem{i} Y. Isokawa,  Geometric probabilities concerninig large random triangles in the hyperbolic plane, {\em Kodai Math. J.}, {\bf 23} (2000), 171--186.
\bibitem{j} A. Johnson, {\it Geometric Probability}, COMAP Inc. (1995), 47--48.
\bibitem{kkaw} K. Kawasaki, Proofs without words, {\em Math. Mag.}, Vol. {\bf 78}(2005), 213.
\bibitem{lw} C. L. Lee and K. B. Wong,  A note on the broken stick problem, {\em Menemui Matematik (Discovering Mathematics)}, {\bf 25}(2003), 20--24.
\bibitem{mp} R. Mironescu and L. Panaitopol, The existence of triangles with prescribed angle bisector lengths, {\em Amer. Math. Monthly}, {\bf 101} (1994), 58--60.
\bibitem{p} H. $\rm Poincar\acute{e}$, {\it Calcul des Probabilit$\acute{e}$s}, George Carte, Paris, 1896; 2nd ed., Gauthier-Villars, 1912; repr. Jacques Gabay, Paris, 1981.
\bibitem{mgb} A. Mood, F. Graybill, and D. Boes, {\it Introduction to the Theory of Statistics}, 3rd ed., McGraw-Hill, 1974.
\bibitem{hsam} H. Samelson, Proof without words, {\em Math. Mag.}, {\bf 76}(2003), 225.
\bibitem{s} I. Shafarevich, {\it Basic Algebraic Geometry}, Springer-Verlag, Second Edition, Vol I,  1994.
\bibitem{me} The moderators and examiners, {\it Solutions to problems and riders proposed in the Senate House examination for 1854}, MacMillan, Cambridge, 1854.
\bibitem{vv} V. Gh. Voda, {\it Triunghiul-Ringul (The triangle-the boxing ring-in Romanian)} Albatros Publishing, Bucharest, 1979.
\bibitem{wd} D. Wells,  Problem 1232, {\em Pi Mu Epsilon}, (2010), 560.
\end{thebibliography}
\end{document}